\definecolor{OrangeRed}{cmyk}{0,0.6,1,0}
\definecolor{DarkBlue}{cmyk}{1,1,0,0.20}
\definecolor{Black}{cmyk}{0,0,0,1}
\definecolor{Violet}{cmyk}{0.79,0.88,0,0}
\definecolor{myblue}{rgb}{0.66,0.78,1.00}
\newtheorem{theorem}{Theorem}[section]
\newtheorem{corollary}[theorem]{Corollary}
\newtheorem{proposition}[theorem]{Proposition}
\theoremstyle{definition}
\newtheorem{definition}[theorem]{Definition}
\newtheorem{remark}[theorem]{Remark}
\newcommand{\C}{\mathbb{C}}
\newcommand{\N}{\mathbb{N}}
\renewcommand{\P}{\mathbb{P}}
\newcommand{\R}{\mathbb{R}}
\newcommand{\T}{\mathrm{T}}
\newcommand{\I}{\mathrm{i}}
\newcommand{\cA}{\mathcal{A}}
\newcommand{\cC}{\mathcal{C}}
\newcommand{\cK}{\mathcal{K}}
\newcommand{\cO}{\mathcal{O}}
\newcommand{\cU}{\mathcal{U}}
\renewcommand{\d}{\mathrm{d}}
\newcommand\VT{\mathrm{VT}}
\newcommand\holo{holomorphic} 
\newcommand\nbd{neighborhood}
\newcommand\wt{\widetilde}
\def\dibar{\overline\partial}
\def\bs{\backslash}
\newcommand\CAP{{\rm CAP}}
\newcommand\PCAP{{\rm PCAP}}
\newcommand\BOP{{\rm BOP}}
\newcommand\POP{{\rm POP}}
\numberwithin{equation}{section}
\begin{document}
\title[Invariance of the parametric Oka property]
{Invariance of the parametric Oka property}
\author{Franc Forstneri\v c}
\address{Faculty of Mathematics and Physics, University of Ljubljana, 
and Institute of Mathematics, Physics and Mechanics, Jadranska 19, 
1000 Ljubljana, Slovenia}
\email{franc.forstneric@fmf.uni-lj.si}
\thanks{Supported by the research program P1-0291, Republic of Slovenia.}

%
%
%
%
\subjclass[2000]{32E10, 32E30, 32H02}
\date{\today}
\keywords{Oka principle, Stein spaces, subelliptic submersions}

%
%
%
%
\begin{abstract}
Assume that $E$ and $B$ are complex manifolds and that 
$\pi\colon E\to B$ is a holomorphic Serre fibration
such that $E$ admits a finite dominating
family of holomorphic fiber-sprays over a small neighborhood of any 
point in $B$. We show that the parametric Oka property (POP) 
of $B$ implies POP of $E$; conversely, POP of $E$ implies POP of $B$
for contractible parameter spaces. This follows from 
a parametric Oka principle for holomorphic liftings 
which we establish in the paper.
\end{abstract}
\maketitle

\centerline{\em Dedicated to Linda P.\ Rothschild}
\smallskip

\section{The Oka properties}
\label{Sec:Oka}
The main result of this  paper is that a subelliptic holomorphic
submersion $\pi\colon E\to B$ between (reduced, paracompact) 
complex spaces satisfies the {\em parametric Oka property}.
{\em Subellipticity} means that $E$ admits a finite dominating
family of holomorphic fiber-sprays over a neighborhood of any 
point in $B$ (Def.\ \ref{def:SSF}). 
The conclusion means that for any Stein source space $X$,
any compact Hausdorff space $P$ (the parameter space), 
and any continuous map $f\colon X\times P\to B$ which is $X$-holomorphic 
(i.e., such that $f_p=f(\cdotp,p)\colon X\to B$ is holomorphic 
for every $p\in P$), a continuous lifting $F\colon X\times P\to E$ of $f$ 
(satisfying $\pi\circ F=f$) can be homotopically deformed 
through liftings of $f$ to an $X$-holomorphic lifting.  
(See Theorem  \ref{SES:lifting-maps} for a precise statement.)

\vskip -5mm
\[
	\xymatrix{ & E \ar[d]^{\pi} \\ 
	           X\times P \ar[r]^{f} \ar[ur]^{F} & B }
\]
\vskip 2mm

The following result is an easy consequence. 
Suppose that $E$ and $B$ are complex manifolds and
that $\pi\colon E\to B$ is a subelliptic submersion which 
is also a Serre fibration (such map is called 
a {\em subelliptic Serre fibration}),  or is a 
holomorphic fiber bundle whose fiber satisfies the parametric Oka property.
Then the parametric Oka property passes up from the base 
$B$ to the total space $E$; it also passes down from 
$E$ to $B$ if the parameter space $P$ is contractible, or if  $\pi$ is a 
weak homotopy equivalence (Theorem \ref{SES:ascend-descend}). 

We begin by recalling the relevant notions. 
Among the most interesting phenomena in complex geometry are,
on the one hand, {\em holomorphic rigidity}, commonly expressed
by Kobayashi-Eisenman hyperbolicity; and, on the
other hand, {\em holomorphic flexibility}, a term introduced
in \cite{FF:flexibility}.
While Kobayashi hyperbolicity of a complex manifold $Y$ implies 
in particular that there exist no nonconstant holomorphic maps $\C\to Y$,
flexibility of $Y$ means that it admits many nontrivial holomorphic
maps $X\to Y$ from any Stein manifold $X$; in particular, from 
any Euclidean space $\C^n$. 

The most natural flexibility properties are 
the {\em Oka properties} which originate in the seminal works 
of Oka \cite{Oka1} and Grauert \cite{Grauert2,Grauert3}.
The essence of the classical {\em Oka-Grauert principle} is 
that a complex Lie group, or a complex homogeneous manifold,
$Y$, enjoys the following:

\smallskip
\noindent \textbf{Basic Oka Property (BOP) of $Y$}: 
{\em Every continuous map $f \colon X\to Y$ from a 
Stein space  $X$ is homotopic to a holomorphic map. 
If in addition $f$ is holomorphic on (a neighborhood of) 
a compact $\cO(X)$-convex subset $K$ of $X$, and
if $f|_{X'}$ is holomorphic on a closed complex subvariety $X'$
of $X$, then there is a homotopy $f^t\colon X\to Y$ $(t\in [0,1])$ 
from $f^0=f$ to a holomorphic map $f^1$ 
such that for every $t\in [0,1]$, $f^t$ is holomorphic and 
uniformly close to $f^0$ on $K$, and $f^t|_{X'}=f|_{X'}$.} 
\smallskip

All complex spaces in this paper are assumed to be reduced and paracompact.
A map is said to be holomorphic on a compact subset $K$
of a complex space $X$ if it is holomorphic in an open neighborhood
of $K$ in $X$; two such maps are identified if they
agree in a (smaller) neighborhood of $K$; for a family of maps,
the neighborhood should be independent of the parameter.

When $Y=\C$, BOP combines the Oka-Weil approximation theorem 
and the Cartan extension theorem.
BOP of $Y$ means that, up to a homotopy obstruction, the same
approximation-extension result holds for holomorphic maps $X\to Y$
from any Stein space $X$ to $Y$. 

Denote by $\cC(X,Y)$  (resp.\ by $\cO(X,Y)$) the space of all 
continuous (resp.\ holomorphic) maps $X\to Y$, endowed with the topology
of uniform convergence on compacts. We have a natural inclusion
\begin{equation}
\label{incl}
		\cO(X,Y) \hookrightarrow \cC(X,Y).
\end{equation}
BOP of $Y$ implies that every connected component of $\cC(X,Y)$ 
contains a component of $\cO(X,Y)$. By \cite[Theorem 5.3]{FF:CAP}, 
BOP also implies the following

\smallskip
\noindent 
{\bf One-parametric Oka Property:} 
{\em A path $f\colon [0,1] \to \cC(X,Y)$ such that
$f(0)$ and $f(1)$ belong to $\cO(X,Y)$ can be deformed, with fixed ends at $t=0,1$,
to a path in $\cO(X,Y)$. Hence (\ref{incl}) induces a bijection 
of the path connected components of the two spaces.}
\smallskip

$Y$ enjoys the {\bf Weak Parametric Oka Property} if 
for each finite polyhedron $P$ and subpolyhedron $P_0\subset P$,
a map $f\colon P\to \cC(X,Y)$ such that $f(P_0)\subset \cO(X,Y)$ can be deformed
to a map $\wt f\colon P\to \cO(X,Y)$ by a homotopy that is fixed on $P_0$:
\vskip -3mm
\[
	\xymatrix{ 
	  P_0  \ar[d]_{incl}  \ar[r] & \cO(X,Y) \ar[d]^{incl}  \\
	  P    \ar[r]^{\!\!\!\!\!\! f}  \ar[ur]^{\wt  f}	 & \cC(X,Y) } 
\]

\smallskip
\noindent
This implies that (\ref{incl}) is a weak homotopy equivalence 
\cite[Corollary 1.5]{FPrezelj:OP1}.

\begin{definition}
\label{POP}
\textbf{(Parametric Oka Property (POP))}
Assume that $P$ is a compact Hausdorff space and that $P_0$ is a closed subset of $P$.
A complex manifold $Y$ enjoys {\rm POP} for the pair $(P,P_0)$ if the following holds.
Assume that $X$ is a Stein space, $K$ is a compact $\cO(X)$-convex 
subset of $X$, $X'$ is a closed complex subvariety of $X$, and 
$f\colon X\times P\to Y$ is a continuous map such that 
\begin{itemize} 
\item[(a)]   the map $f_p=f(\cdotp,p)\colon X\to Y$ is holomorphic 
for every $p\in P_0$, and
\item[(b)]  $f_p$ is holomorphic on $K\cup X'$ for every $p\in P$. 
\end{itemize}
Then there is a homotopy $f^t\colon X\times P\to Y$ $(t\in [0,1])$ 
such that $f^t$ satisfies properties (a) and (b) above for all $t\in[0,1]$, and also
\begin{itemize} 
\item[(i)]    $f^1_p$ is holomorphic on $X$ for all $p\in P$, 
\item[(ii)]  $f^t$ is uniformly close to $f$ on $K\times P$ for all $t\in [0,1]$, and 
\item[(iii)]  $f^t=f$ on $(X\times P_0)\cup (X'\times P)$ for all $t\in [0,1]$.
\end{itemize}
The manifold $Y$ satisfies \POP\ if the above holds for each pair 
$P_0\subset P$ of compact Hausdorff spaces.
Analogously we define \POP\ for sections of a holomorphic map $Z\to X$. 
\qed \end{definition}

Restricting POP to pairs $P_0\subset P$ consisting of finite polyhedra
we get Gromov's Ell$_\infty$ property \cite[Def.\ 3.1.A.]{Gromov:OP}.
By Grauert, all complex homogeneous manifolds enjoy 
POP for finite polyhedral inclusions $P_0\subset P$ \cite{Grauert2,Grauert3}.
A weaker sufficient condition, called {\em ellipticity} 
(the existence of a dominating spray on $Y$, Def.\ \ref{fiber-spray} below), 
was found by Gromov \cite{Gromov:OP}. A presumably even
weaker condition, {\em subellipticity} (Def.\ \ref{def:SS}),
was introduced in \cite{FF:subelliptic}.    

If $Y$ enjoys BOP or POP, then the corresponding Oka property also 
holds for sections of any holomorphic fiber bundle $Z\to X$ 
with fiber $Y$ over a Stein space $X$ \cite{FF:Kohn}.
See also Sect.\ \ref{subelliptic} below and 
the papers \cite{FF:surveyOka,Larusson1,Larusson2,Larusson3}. 

It is important to know which operations preserve Oka properties.
The following result was stated in \cite{FF:CAP} (remarks following Theorem 5.1), 
and more explicitly in \cite[Corollary 6.2]{FF:EOP}.
(See also \cite[Corollary 3.3.C']{Gromov:OP}.)

%
%
%
%
\begin{theorem}
\label{SES:ascend-descend}
Assume that $E$ and $B$ are complex manifolds. If $\pi \colon E\to B$ 
is a sub\-elliptic Serre fibration (Def.\ \ref{def:SSF} below), 
or a holomorphic fiber bundle with \POP\ fiber, then
the following hold:
\begin{itemize}
\item[(i)] If $B$ enjoys the parametric Oka property {\rm (\POP)}, then so does $E$.
\item[(ii)] If $E$ enjoys \POP\ for contractible parameter spaces $P$ 
(and arbitrary closed subspaces $P_0$ of $P$), then so does $B$.
\item[(iii)] If in addition $\pi \colon E\to B$ is a weak homotopy equivalence then 
\[
		{\rm POP\ of\ } E \Longrightarrow \rm{POP\ of\ } B.
\]
\end{itemize}
All stated implications  hold for a specific pair
$P_0\subset P$ of parameter spaces.
\end{theorem}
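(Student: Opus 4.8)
The plan is to derive Theorem~\ref{SES:ascend-descend} from the parametric Oka principle for holomorphic liftings (Theorem~\ref{SES:lifting-maps}), treating the fiber-bundle case as a special instance since a holomorphic fiber bundle with \POP\ fiber admits local fiber-sprays coming from the \POP\ (ellipticity-type) structure on the fiber, hence is a subelliptic submersion, and being a bundle it is a Serre fibration; so in all cases $\pi$ is a subelliptic Serre fibration and Theorem~\ref{SES:lifting-maps} applies.

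For part (i), suppose $B$ enjoys \POP\ for the pair $(P,P_0)$. Given a Stein space $X$, an $\cO(X)$-convex compact $K$, a closed subvariety $X'$, and a continuous $X$-holomorphic map $g\colon X\times P\to E$ that is $X$-holomorphic over $P_0$ and holomorphic on $K\cup X'$ for all $p$, I first push down: $f=\pi\circ g\colon X\times P\to B$ satisfies the hypotheses of \POP\ for $B$, so there is a homotopy $f^t$ deforming $f=f^0$ to an $X$-holomorphic $f^1$, fixed over $(X\times P_0)\cup(X'\times P)$ and approximating $f$ on $K\times P$. Since $\pi$ is a Serre fibration, this homotopy lifts (rel the relevant subspace) to a homotopy $g^t$ of continuous liftings of $f^t$ with $g^0=g$; we may keep $g^t$ holomorphic on a neighborhood of $K\cup X'$ by a standard relative lifting/interpolation argument. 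Now $g^1$ is a continuous lifting of the $X$-holomorphic map $f^1$, $X$-holomorphic over $P_0$ and holomorphic near $K\cup X'$, so Theorem~\ref{SES:lifting-maps} deforms $g^1$ through liftings of $f^1$ to an $X$-holomorphic lifting $G$, with the approximation on $K\times P$ and the interpolation on $(X\times P_0)\cup(X'\times P)$. Concatenating, $g\leadsto g^1\leadsto G$ is the required homotopy, so $E$ enjoys \POP\ for $(P,P_0)$.

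For part (ii), assume $E$ enjoys \POP\ for $(P,P_0)$ with $P$ contractible, and let $f\colon X\times P\to B$ satisfy the \POP\ hypotheses for $B$. Because $P$ is contractible and $\pi$ is a Serre fibration, the map $f$ lifts to a continuous $g\colon X\times P\to E$ with $\pi\circ g=f$: over a point $p_0$ one lifts $f_{p_0}$ (after first homotoping it, if necessary, using that $f$ is already holomorphic over $P_0$ — more carefully one uses the contraction of $P$ to extend a lift from $P_0$, or from a point, over all of $P$), and one arranges $g$ to be $X$-holomorphic over $P_0$ and holomorphic near $K\cup X'$. Apply \POP\ of $E$ to $g$: we get $G\colon X\times P\to E$, $X$-holomorphic in $p$, with $\pi\circ G$ a homotopy-perturbation of $f$ that is $X$-holomorphic; pushing down, $f^1:=\pi\circ G\colon X\times P\to B$ is $X$-holomorphic, equals $f$ on $(X\times P_0)\cup(X'\times P)$, and approximates $f$ on $K\times P$, and the homotopy $\pi\circ G^t$ connects it to $f$. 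This is exactly \POP\ for $B$. Part (iii) is the same argument, except that the existence and the rel-$P_0$ extendability of the continuous lift $g$ of $f$ is supplied not by contractibility of $P$ but by $\pi$ being a weak homotopy equivalence (hence a $0$- and $1$-equivalence on all relevant pairs), via obstruction theory applied to the CW-pair $(P,P_0)$.

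The main obstacle is the lifting step in (ii) and (iii): producing an initial continuous lift $g$ of $f\colon X\times P\to B$ that simultaneously (a) agrees over $P_0$ with a prescribed $X$-holomorphic lift, (b) is holomorphic in a fixed neighborhood of $K\cup X'$, independently of $p$, and (c) exists globally over the possibly-complicated parameter space $P$. For (b) one works over a Stein neighborhood of $K$ and on $X'$ where the relevant cohomological/Serre-fibration obstructions can be killed holomorphically or where Theorem~\ref{SES:lifting-maps} itself (applied with trivial homotopy) can be invoked as a black box; for (c) one uses contractibility in (ii) and weak homotopy equivalence plus obstruction theory over the CW-pair in (iii). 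Once the initial lift is in hand, everything reduces to the already-established Theorem~\ref{SES:lifting-maps}, so the proof is essentially a diagram chase combined with the homotopy lifting property. I would present it in that order: reduce the bundle case to the subelliptic case; prove (i) by push-down / lift-homotopy / apply-\ref{SES:lifting-maps}; prove (ii) by lift / apply-\ref{SES:lifting-maps} / push-down; note (iii) as the variant with obstruction theory replacing contractibility; and finally remark that all steps are performed for the fixed pair $(P,P_0)$, which gives the last sentence of the statement.
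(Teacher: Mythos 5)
There is a genuine gap, and it is precisely the point your proposal flags as ``the main obstacle'' without resolving it. The paper's proof never applies \POP\ over a general Stein source $X$; it reduces everything to \PCAP\ via Theorem~\ref{CAP}, i.e.\ works on a special convex pair $(K,Q)$ in $\C^n$. That reduction is the missing key idea: because $K$ and $Q$ are convex, $K$ is a strong deformation retract of $Q$, so the Serre-fibration extension of a lift from $(K\times P)\cup(Q\times P_0)$ to $Q\times P$ is elementary, and the initial \emph{holomorphic} lift over $K\times P$ of the perturbed base map $g$ is obtained not by homotopy lifting but by a holomorphic retraction onto the fibers of $\pi$ (\cite[Proposition 3.2]{FF:Kohn}), using that $g$ is uniformly close to $\pi\circ F$ there. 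Your version of (i) instead lifts the homotopy $f^t$ over $X\times P\times[0,1]$ for a general Stein $X$ and compact Hausdorff $P$ ``rel the relevant subspace'' while keeping the lift holomorphic near $K\cup X'$; the Serre homotopy lifting property is only available for polyhedra, $X\times P$ is not one, and no ``standard relative lifting/interpolation argument'' produces a lift that stays holomorphic on a fixed neighborhood of $K\cup X'$. Similarly, in (ii) you insist on an initial continuous lift that is already $X$-holomorphic over $P_0$ and holomorphic near $K\cup X'$; the paper shows this is unnecessary: a merely continuous lift $F$ of $f$ (existing by contractibility of $P$) is upgraded by \emph{two} successive applications of Theorem~\ref{SES:lifting-maps} --- first over $Q\times P_0$ (and extended trivially to all of $P$), then over $K\times P$ keeping the homotopy fixed over $P_0$ --- before \POP\ of $E$ is invoked.

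Two further points. In (iii), obstruction theory over ``the CW-pair $(P,P_0)$'' is not available, since $P$ is only assumed compact Hausdorff (and in any case the map to be lifted is defined on $Q\times P$, not on $P$); the paper instead gets the lift from Cauty's theorem (a Serre fibration between manifolds is a Hurewicz fibration) together with the Whitehead lemma. Finally, your opening reduction is unjustified: \POP\ of the fiber does \emph{not} yield dominating fiber-sprays, so a holomorphic fiber bundle with \POP\ fiber cannot be assumed subelliptic (that implication is essentially the open problem of Remark~\ref{BiP}); the bundle case is carried as a separate, parallel hypothesis throughout Theorem~\ref{SES:lifting-maps} rather than being absorbed into the subelliptic case.
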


The proof Theorem \ref{SES:ascend-descend}, proposed in \cite{FF:EOP}, 
requires the parametric Oka property for sections of certain continuous families of 
subelliptic submersions. When Finnur L\'arusson asked for explanation 
and at the same time told me of his applications of this result 
\cite{Larusson4} (personal communication, December 2008),
I decided to write a more complete exposition. 
We prove Theorem \ref{SES:ascend-descend} in Sec.\ \ref{AD} 
as a consequence of Theorem \ref{SES:lifting-maps}.
This result should be compared with L\'arusson's \cite[Theorem 3]{Larusson4} 
where the map $\pi \colon E\to B$ is assumed to be an {\em intermediate fibration}
in the model category that he constructed.

\begin{corollary} 
\label{c1.6}
Let $Y=Y_m\to Y_{m-1}\to \cdots\to Y_0$, where each $Y_j$ is a complex manifold
and every map $\pi_j\colon Y_j\to Y_{j-1}$ $(j=1,2,\ldots,m)$ is a 
subelliptic Serre fibration, or a holomorphic fiber bundle with \POP\ fiber. 
Then the following hold:
\begin{itemize}
\item[(i)] 
If one of the manifolds $Y_j$ enjoys \BOP, or \POP\ with a contractible 
parameter space, then all of them do. 
\item[(ii)]
If in addition every $\pi_j$ is acyclic (a weak homotopy equivalence) and if 
$Y$ is a Stein manifold, then every manifold $Y_j$ in the tower satisfies
the implication \BOP\ $\Longrightarrow$ \POP.
\end{itemize}
\end{corollary}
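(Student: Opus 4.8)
The plan is to deduce Corollary~\ref{c1.6} from Theorem~\ref{SES:ascend-descend} by a straightforward induction on the length $m$ of the tower, applying the theorem to one map $\pi_j\colon Y_j\to Y_{j-1}$ at a time. The point is that each $\pi_j$ is, by hypothesis, either a subelliptic Serre fibration or a holomorphic fiber bundle with \POP\ fiber, so Theorem~\ref{SES:ascend-descend} applies verbatim to the pair $(Y_j,Y_{j-1})$.

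For part (i), first suppose some $Y_k$ enjoys \POP\ with a contractible parameter space (the case of \BOP\ is included, since \BOP\ is \POP\ for the pair $P_0=\emptyset\subset P=\{pt\}$, and a point is contractible). Moving up the tower, I would apply Theorem~\ref{SES:ascend-descend}(i) to $\pi_{k+1},\pi_{k+2},\ldots,\pi_m$ in turn: if $Y_{j-1}$ enjoys \POP\ (for a given pair $P_0\subset P$, in particular for contractible $P$) then so does $Y_j=E$ with $B=Y_{j-1}$. Moving down, I would apply Theorem~\ref{SES:ascend-descend}(ii) to $\pi_k,\pi_{k-1},\ldots,\pi_1$ in turn: here $Y_j=E$ enjoys \POP\ for contractible parameter spaces, hence so does $Y_{j-1}=B$. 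After finitely many steps every $Y_j$ enjoys \POP\ for contractible $P$ (and arbitrary closed $P_0\subset P$), which in particular gives \BOP\ for every $Y_j$. Since all implications in Theorem~\ref{SES:ascend-descend} hold for a fixed pair $P_0\subset P$, the argument also shows that if one $Y_j$ enjoys \POP\ for a specific contractible pair then so do all the others.

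For part (ii), the extra hypothesis is that every $\pi_j$ is a weak homotopy equivalence, so that the whole composition $Y=Y_m\to Y_0$ and each partial composition is a weak homotopy equivalence as well; also $Y=Y_m$ is Stein. Suppose some $Y_k$ satisfies \BOP; I must show it satisfies full \POP, i.e.\ \POP\ for every pair $P_0\subset P$ of compact Hausdorff spaces. The idea is to transport the Oka property down to the Stein manifold $Y$, invoke the known fact that for Stein manifolds \BOP\ implies \POP, and then transport it back up. Concretely: by part~(i), \BOP\ of $Y_k$ implies \BOP\ of $Y_m=Y$. Since $Y$ is Stein, \BOP\ of $Y$ implies \POP\ of $Y$ (this is the cited principle that for Stein manifolds the basic and parametric Oka properties are equivalent, e.g.\ via \cite[Theorem 5.3]{FF:CAP} together with the results of \cite{FF:CAP}). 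Now apply Theorem~\ref{SES:ascend-descend}(iii) to $\pi_m,\pi_{m-1},\ldots,\pi_1$ successively: since each $\pi_j$ is a weak homotopy equivalence, \POP\ of $Y_j$ implies \POP\ of $Y_{j-1}$, so descending the tower gives \POP\ of every $Y_j$, in particular of $Y_k$. (Alternatively one may descend only to the $Y_j$ in question and ascend/descend as needed; the acyclicity of each $\pi_j$ makes both directions available for full \POP.)

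The only genuinely non-formal ingredient is the implication ``\BOP\ $\Rightarrow$ \POP\ for a Stein manifold $Y$,'' which is exactly where the hypothesis that $Y$ is Stein is used; everything else is bookkeeping with the three implications of Theorem~\ref{SES:ascend-descend}. I expect the main point to check carefully is that the implications compose correctly in the stated directions and for a fixed pair $P_0\subset P$ when one wants the sharper ``specific pair'' version of~(i); the topological hypotheses (Serre fibration, weak homotopy equivalence) are preserved under composition of the $\pi_j$, but here we never actually need that since we apply Theorem~\ref{SES:ascend-descend} to one $\pi_j$ at a time rather than to a composite.
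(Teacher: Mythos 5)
Your proposal is correct and follows essentially the same route as the paper: part (i) by iterating Theorem~\ref{SES:ascend-descend} along the tower, and part (ii) by pushing \BOP\ up to the Stein manifold $Y$, upgrading to \POP\ there, and descending via Theorem~\ref{SES:ascend-descend}(iii). The only difference is in how the key step ``\BOP\ $\Rightarrow$ \POP\ for Stein $Y$'' is justified: the paper derives it by noting that \BOP\ of a Stein manifold implies ellipticity (\cite[Proposition 1.2]{FPrezelj:OP3}, \cite[3.2.A.]{Gromov:OP}) and then invoking Theorem~\ref{SES:OPS}, whereas your citation of \cite[Theorem 5.3]{FF:CAP} (which concerns the one-parametric Oka property) does not quite pin down the full parametric statement.
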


\begin{proof} 
Part (i) is an immediate consequences of Theorem \ref{SES:ascend-descend}.
For (ii), observe that BOP of $Y_j$ implies BOP of $Y$ by
Theorem \ref{SES:ascend-descend} (i), applied with $P$ a singleton. 
Since $Y$ is Stein, BOP implies that $Y$ is elliptic (see Def.\ \ref{def:SS} below);
for the simple proof see \cite[Proposition 1.2]{FPrezelj:OP3} or \cite[3.2.A.]{Gromov:OP}.
By Theorem \ref{SES:OPS} below it follows that $Y$ also enjoys POP. 
By part (iii) of Theorem \ref{SES:ascend-descend}, POP descends from $Y=Y_m$
to every $Y_j$.
\end{proof}

\begin{remark}
\label{BiP}
A main remaining open problem is whether the implication 
\begin{equation}
\label{BOPimpliesPOP}
	\BOP \Longrightarrow \POP 
\end{equation}
holds for all complex manifolds.
By using results of this paper and of his earlier
works, F.\ L\'arusson proved this implication for a 
large class of manifolds, including all 
quasi-projective manifolds \cite[Theorem 4]{Larusson4}.
The main observation is that there exists an affine holomorphic fiber
bundle $\pi\colon E\to \P^n$ with fiber $\C^n$ whose total space $E$ is Stein;
since the map $\pi$ is acyclic and the fiber satisfies
POP, the implication (\ref{BOPimpliesPOP}) follows 
from Corollary \ref{c1.6} (ii) for any closed complex subvariety
$Y\subset \P^n$ (since the total space $E|_Y=\pi^{-1}(Y)$ is Stein). 
The same applies to complements of hypersurfaces in such $Y$;
the higher codimension case reduces to the hypersurface case by blowing up.
\qed \end{remark}

\section{Subelliptic submersions and Serre fibrations}
\label{subelliptic} 
A holomorphic map $h \colon Z\to X$ of complex spaces is a {\em holomorphic submersion} 
if for every point $z_0\in Z$ there exist an open neighborhood 
$V\subset Z$ of $z_0$, an open neighborhood $U\subset X$ of $x_0=h(z_0)$,
an open set  $W$ in a Euclidean space $\C^p$, and a biholomorphic map
$\phi\colon V\to U\times W$ such that $pr_1\circ \phi= h$,
where $pr_1 \colon U\times W\to U$ is the projection on the first factor.

\vskip -3mm
\[
	\xymatrix{ 
	\!\!\!\!\!\!\!\!\!\!\!\! Z\supset V \ar[d]^{h} \ar[r]^{\phi}  & U\times W \ar[d]^{pr_1} \\ 
	\!\!\!\!\!\!\!\!\!\!\!\!\! X\supset U  \ar[r]^{id}   & U}
\]
\smallskip

Each fiber $Z_x=h^{-1}(x)$ $(x\in X)$ of a holomorphic submersion 
is a closed complex submanifold of $Z$. 
A simple example is the restriction of a holomorphic fiber bundle projection
$E\to X$ to an open subset $Z$ of  $E$.

We recall from \cite{Gromov:OP,FF:subelliptic} 
the notion of a holomorphic spray and domination.

%
%
%
%
\begin{definition}
\label{fiber-spray}
Assume that $X$ and $Z$ are complex spaces and $h\colon Z\to X$
is a holomorphic submersion. For $x\in X$ let $Z_x=h^{-1}(x)$.
\begin{itemize}
\item[(i)] A {\em fiber-spray} on $Z$ is a triple $(E,\pi,s)$
consisting of a holomorphic vector bundle $\pi\colon E\to Z$ 
together with a holomorphic map $s\colon E\to Z$ such that
for each $z\in Z$ we have 
\[
    s(0_z)=z,\quad s(E_z) \subset Z_{h(z)}.
\]
\item[(ii)]  A spray $(E,\pi,s)$  is {\em dominating} at a point $z\in Z$
if its differential  
\[ (\d s)_{\, 0_z} \colon \T _{0_z} E \to \T _z Z\]
at the origin $0_z \in E_z=\pi^{-1}(z)$ maps the subspace $E_z$ of $\T _{0_z} E
$ surjectively onto the {\em vertical tangent space} $\VT_z Z=\ker \d h_z$.
The spray is {\em dominating} (on $Z$) 
if it is dominating at every point $z\in Z$.
\item[(iii)] 
A family of $h$-sprays $(E_j,\pi_j,s_j)$ $(j=1,\ldots,m)$ on $Z$
is dominating at the point $z\in Z$ if
\[
    (\d s_1)_{0_z}(E_{1,z}) + (\d s_2)_{0_z}(E_{2,z})\cdots
                     + (\d s_m)_{0_z}(E_{m,z})= \VT_z Z.
\]
If this holds for every $z\in Z$ then the family is {\em dominating} on $Z$.
\item[(iv)] A spray on a complex manifold $Y$ is a fiber-spray
associated to the constant map $Y\to point$.
\end{itemize}
\end{definition}

The simplest example of a spray on a complex manifold $Y$ is the flow
$Y\times \C\to Y$ of a $\C$-complete holomorphic vector field on $Y$.
A composition of finitely many such flows, with independent time variables, 
is a dominating spray at every point where the given collection of
vector fields span the tangent space of $Y$. Another example of a dominating
spray is furnished by the exponential map on a complex Lie group $G$,
translated over $G$ by the group multiplication.

The following notion of an {\em elliptic submersion} is due to 
Gromov \cite[Sect.\ 1.1.B]{Gromov:OP};
{\em subelliptic submersions} were introduced in \cite{FF:subelliptic}.
For examples see \cite{FF:subelliptic,FF:CAP,Gromov:OP}.

%
%
\begin{definition}                 
\label{def:SS}
A \holo\ submersion $h\colon Z\to X$ is said to be {\em elliptic} 
(resp.\ {\em subelliptic})
if each point $x_0\in X$ has an open \nbd\ $U\subset X$ such that
the restricted submersion $h\colon Z|_U \to U$ admits a dominating 
fiber-spray (resp.\ a finite dominating family of fiber-sprays).
A complex manifold $Y$ is elliptic (resp.\ subelliptic) if
the trivial submersion $Y\to point$ is such.
\end{definition}

The following notions appear in Theorem \ref{SES:ascend-descend}.

%
%
%
%
\begin{definition}
\label{def:SSF}
(a) A continuous map $\pi\colon  E\to B$ is
{\em Serre fibration} if it satisfies the homotopy lifting property 
for polyhedra (see {\rm \cite[p.\ 8]{Whitehead}}). 

(b) A holomorphic map $\pi\colon E\to B$ 
is an {\em elliptic Serre fibration} (resp.\ a {\em subelliptic Serre fibration}) 
if it is a surjective elliptic (resp.\ subelliptic) submersion and 
also a Serre fibration.
\end{definition}

The following result was proved in 
\cite{FF:Kohn} (see Theorems 1.4 and 8.3) by following the scheme 
proposed in \cite[Sect.\ 7]{FPrezelj:OP3}.
Earlier results include Gromov's Main Theorem 
\cite[Theorem 4.5]{Gromov:OP} (for elliptic submersions 
onto Stein manifolds, without interpolation),
\cite[Theorem 1.4]{FPrezelj:OP3} (for elliptic submersions
onto Stein manifolds), \cite[Theorem 1.1]{FF:subelliptic} 
(for subelliptic submersion), and \cite[Theorem 1.2]{FF:CAP} 
(for fiber bundles with POP fibers over Stein manifolds).

\begin{theorem}
\label{SES:OPS}
Let $h\colon Z\to X$ be a holomorphic submersion of a complex space $Z$ 
onto a Stein space $X$. Assume that $X$ is exhausted by Stein Runge domains 
$D_1\Subset D_2\Subset \cdots \subset X=\bigcup_{j=1}^\infty D_j$
such that every $D_j$ admits a stratification 
\begin{equation}
\label{strat}
	D_j = X_0\supset X_{1}\supset\cdots\supset X_{m_j} =\emptyset
\end{equation}
with smooth strata $S_k=X_k\bs X_{k+1}$ such that the restriction of 
$Z\to X$ to every connected component of each $S_k$ is a subelliptic submersion,
or a holomorphic fiber bundle with \POP\ fiber.
Then sections $X\to Z$ satisfy \POP. 
\end{theorem}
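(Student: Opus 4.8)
The statement is established in \cite{FF:Kohn} (Theorems 1.4 and 8.3); here is the scheme I would follow, along the lines of \cite[Sect.\ 7]{FPrezelj:OP3}. The plan is to reduce \POP\ for sections of $h\colon Z\to X$ to a single local parametric Oka lemma over a Cartan pair, and then to assemble the local solutions by an induction that runs simultaneously over (a) the exhaustion $D_1\Subset D_2\Subset\cdots$, (b) the strata of a fixed $D_j$, and (c) a finite covering of each stratum by small coordinate charts grouped into Cartan pairs.

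First I would fix the data $(X,K,X',P,P_0,f)$ as in Definition \ref{POP} and isolate the basic step: given a compact Hausdorff pair $P_0\subset P$, an $\cO(X)$-convex compact $L\subset X$, and a Cartan pair $(C,C')$ in $X$ with $C\cup C'$ contained in a chart over which $Z$ is biholomorphically trivial and subelliptic (or a holomorphic fiber bundle with \POP\ fiber), together with a continuous family of sections holomorphic near $C\cup C'\cup L$, agreeing with the fixed section over $X'$ and over $P_0$ --- one can homotope this family, keeping it uniformly close over $C\cup L$ and fixed over $X'\cup(X\times P_0)$, to a family holomorphic near $C\cup C'\cup L$. In the subelliptic case one pulls back the submersion over the base of the family, uses the finite dominating family of fiber-sprays (Definition \ref{fiber-spray}) to express the transition over the Cartan pair, and glues by the splitting lemma for sprays (the nonlinear Cartan lemma), now carried out with continuous dependence on $p\in P$ and with the normalization that all spray parameters vanish over $P_0$, so that interpolation is preserved; in the fiber-bundle case one invokes \POP\ of the fiber directly. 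Iterating this step over a chain of Cartan pairs yields the parametric convex approximation/extension property over compacts contained in a single good chart.

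Next I would run the stratified induction over a fixed $D_j$. Using the stratification $(\ref{strat})$, I first make the family holomorphic in a neighborhood of the top (open) stratum $S_0=D_j\setminus X_1$ by covering the relevant part of $\overline{S_0}$ with finitely many Cartan pairs inside good charts and iterating the basic step; then I descend one stratum at a time, at the $k$-th stage extending the holomorphic family already constructed near $X_{k+1}$ across $S_k$ by the same iteration (here the subellipticity, or the \POP-fiber hypothesis, is used on each connected component of $S_k$). The outcome is a homotopy, defined over a neighborhood of $\overline{D_{j-1}}$, from a \POP-admissible modification of the given family to one holomorphic near $\overline{D_{j-1}}$, uniformly close to the previous family on $K$, and fixed over $X'$ and over $P_0$. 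Finally, since the $D_j$ are Runge in $X$, I make the approximation at stage $j$ sharp enough on $\overline{D_{j-1}}$ that the sequence of homotopies telescopes, by a diagonal argument, to a homotopy of families over all of $X$ satisfying (i)--(iii).

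The main obstacle is the basic step over a Cartan pair for a \emph{continuous family} of subelliptic submersions --- which is exactly what the strata produce after pullback: one needs the gluing lemma for sprays with continuous dependence on $p\in P$, estimates uniform in $p$, and full compatibility with the requirement that the deformation be trivial over $P_0$ and over $X'$. Packaging this, and simultaneously keeping the approximation errors summable through the triple induction over exhaustion, strata, and Cartan pairs, is the technical heart of the argument; it is exactly the content developed in \cite{FF:Kohn}.
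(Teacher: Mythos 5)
The paper itself gives no proof of Theorem~\ref{SES:OPS}: it is quoted from \cite{FF:Kohn} (Theorems 1.4 and 8.3), following the scheme of \cite[Sect.~7]{FPrezelj:OP3}. Your citation therefore matches the paper's treatment, and your sketch of the machinery --- a local gluing step over a Cartan pair carried out with continuous dependence on $p\in P$ and triviality of the deformation over $P_0$ and along $X'$, assembled by induction over Cartan pairs, strata, and the Runge exhaustion --- is the right machinery in broad outline.

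There is, however, a genuine error in the way you run the stratified induction. You propose to ``first make the family holomorphic in a neighborhood of the top (open) stratum $S_0=D_j\setminus X_1$'' and then to ``descend'' to the deeper strata. This cannot work: $S_0$ is open and dense, a family holomorphic only near (a compact part of) $S_0$ gives no control whatsoever at points of $X_1\subset\overline{S_0}$, and there is no mechanism for ``descending'' --- one would be trying to correct the sections on a set where they are already declared holomorphic, with no interpolation data to anchor the correction. The induction must go in the opposite direction: one starts with the \emph{minimal} closed stratum $X_{m_j-1}$ (a smooth closed subvariety over whose components the restriction of $Z$ is subelliptic or a bundle with \POP\ fiber), solves the parametric problem there, and then, assuming the family is already holomorphic in a neighborhood of $X_{k+1}$, deforms it to be holomorphic in a neighborhood of $X_k$ while keeping it holomorphic (and approximated, resp.\ fixed to a given order) along $X_{k+1}$; the Cartan pairs are chosen inside the smooth stratum $S_k$ and the already-handled subvariety $X_{k+1}$ enters as an interpolation set, exactly as in the extension-from-subvarieties scheme of \cite{FPrezelj:OP3}. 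Your own phrase ``extending the holomorphic family already constructed near $X_{k+1}$ across $S_k$'' is in fact this correct bottom-up step, so your description is internally inconsistent; but as written, the starting point of your induction is the one place it cannot start. (A smaller imprecision: in the basic gluing step only the attached bump $C'$ need lie in a small chart admitting a dominating family of fiber-sprays; requiring $C\cup C'$ to sit in a single trivializing chart would prevent the exhaustion from ever being completed.)
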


\begin{remark}
In previous papers 
\cite{FPrezelj:OP1,FPrezelj:OP2,FPrezelj:OP3,FF:subelliptic,FF:CAP,FF:EOP}
POP was only considered for pairs of parameter spaces  $P_0\subset P$ such that 

(*)  $P$ is a nonempty compact Hausdorff space, and $P_0$ 
is a closed subset of $P$ that is a strong 
deformation neighborhood retract (SDNR) in $P$. 

Here we dispense with the SDNR condition 
by using the Tietze extension theorem for maps into Hilbert spaces 
(see the proof of Proposition \ref{local-ext}).

Theorem \ref{SES:OPS} also hold when 
$P$ is a locally compact and countably compact Hausdorff space, 
and $P_0$ is a closed subspace of $P$. The proof requires
only a minor change of the induction scheme (applying the diagonal process).

On the other hand, all stated results remain valid if we restrict to
pairs $P_0\subset P$ consisting of finite polyhedra; this 
suffices for most applications.
\qed \end{remark}

Theorem \ref{SES:OPS} implies the following result concerning 
holomorphic liftings.

\begin{theorem}
\label{Lift}
Let $\pi \colon E\to B$ be a holomorphic submersion of a complex space $E$ 
onto a complex space $B$. Assume that $B$ admits a stratification 
$B=B_0\supset B_1\supset \cdots \supset B_m=\emptyset$ by 
closed complex subvarieties such that the 
restriction of $\pi$ to every connected component 
of each difference $B_j\bs B_{j+1}$ is a subelliptic submersion,
or a holomorphic fiber bundle with \POP\ fiber.

Given a Stein space $X$ and a holomorphic map $f\colon X\to B$,
every continuous lifting $F\colon X\to E$ of $f$ 
$(\pi\circ F=f)$ is homotopic through liftings of $f$ 
to a holomorphic lifting.
\end{theorem}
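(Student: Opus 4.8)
The plan is to reduce Theorem \ref{Lift} (holomorphic liftings over a Stein source) to Theorem \ref{SES:OPS} (parametric Oka property for sections of a stratified submersion) by pulling back the submersion $\pi\colon E\to B$ along $f$. Concretely, given the holomorphic map $f\colon X\to B$, form the fiber product
\[
	Z := X\times_B E = \{(x,e)\in X\times E : f(x)=\pi(e)\},
\]
with the two projections $h=pr_X\colon Z\to X$ and $g=pr_E\colon Z\to E$. Since $\pi$ is a holomorphic submersion and $f$ is holomorphic, $Z$ is a complex space (reduced and paracompact, being locally a closed subspace of $X\times E$) and $h\colon Z\to X$ is a holomorphic submersion onto the Stein space $X$. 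Moreover, liftings $F\colon X\to E$ of $f$ — continuous or holomorphic — correspond bijectively to sections $\sigma\colon X\to Z$ of $h$, via $\sigma(x)=(x,F(x))$ and $F=g\circ\sigma$; this correspondence is a homeomorphism on the relevant section/lifting spaces and carries homotopies to homotopies. So it suffices to show that sections $X\to Z$ of $h$ satisfy the (nonparametric, i.e. $P$ a singleton) Oka property with homotopy from a given continuous section, and in fact we get the stronger parametric statement for free.

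The second step is to produce, on $h\colon Z\to X$, a stratification of the Stein base $X$ of the kind required by Theorem \ref{SES:OPS}. Pull back the given stratification $B=B_0\supset B_1\supset\cdots\supset B_m=\emptyset$ of $B$ by $f$: set $X_k := f^{-1}(B_k)$, a decreasing chain of closed complex subvarieties of $X$ with $X_0=X$ and $X_m=\emptyset$. Over the difference $X_k\setminus X_{k+1}$ the submersion $h\colon Z\to X$ is the pullback of $\pi$ restricted over $B_k\setminus B_{k+1}$. Here I use that subellipticity and the fiber-bundle-with-POP-fiber condition are both preserved under holomorphic pullback: if $(E_i,\pi_i,s_i)_{i=1}^{r}$ is a finite dominating family of $\pi$-sprays over an open $U\subset B$, then the pulled-back bundles over $f^{-1}(U)\cap(X_k\setminus X_{k+1})$ with the induced spray maps form a finite dominating family for $h$ — domination is checked fiberwise and $f$ induces an isomorphism of vertical tangent spaces $\VT_{\sigma(x)}Z\cong\VT_{F(x)}E|_{f(x)}$; and a holomorphic fiber bundle with POP fiber pulls back to a holomorphic fiber bundle with the same POP fiber. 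One mild technical point: the strata of the stratification of $X$ required by Theorem \ref{SES:OPS} should be smooth, so I would refine the chain $\{X_k\}$ further by passing to singular loci, $X_k\supset \mathrm{Sing}(X_k)\supset\cdots$, obtaining a stratification of each Runge domain $D_j$ in a Stein exhaustion $X=\bigcup D_j$ whose strata are smooth manifolds on each of which $h$ restricts to a subelliptic submersion (resp.\ a fiber bundle with POP fiber), because these properties are inherited by passing to smaller subvarieties inside a given $B_k\setminus B_{k+1}$. This is exactly the hypothesis of Theorem \ref{SES:OPS}.

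The third step is to invoke Theorem \ref{SES:OPS} for $h\colon Z\to X$ with parameter spaces $P=P_0=\{pt\}$ (or, if one wants the parametric conclusion, with general $P_0\subset P$), and to translate the resulting homotopy of sections back through the correspondence $\sigma\leftrightarrow F$ of the first step: a given continuous lifting $F$ of $f$ yields a continuous section $\sigma^0$ of $h$, Theorem \ref{SES:OPS} deforms it through sections $\sigma^t$ to a holomorphic section $\sigma^1$, and $F^t:=g\circ\sigma^t$ is the desired homotopy through liftings of $f$ from $F^0=F$ to the holomorphic lifting $F^1$. The main obstacle, and the only place where care is genuinely needed, is the second step: verifying that the pulled-back submersion $h\colon Z\to X$ honestly meets the stratification hypothesis of Theorem \ref{SES:OPS} — in particular that pullback preserves domination of spray families and that the preimage stratification can be refined to have smooth strata over which the structural hypothesis still holds. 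Once that bookkeeping is in place, Theorems \ref{SES:OPS} does all the analytic work.
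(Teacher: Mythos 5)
Your proposal is correct and follows essentially the same route as the paper: form the pullback $f^*E\to X$, identify liftings of $f$ with sections of the pullback, refine the preimage stratification $f^{-1}(B_k)$ to one with smooth strata each mapped into a single stratum of $B$ (over each Runge domain of a Stein exhaustion of $X$), and apply Theorem \ref{SES:OPS}. The points you flag as needing care (pullback preserving domination of spray families, and the smooth refinement of the stratification) are exactly the ones the paper handles, albeit more tersely.
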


\vskip -4mm
\[
	\xymatrix{ & E \ar[d]^{\pi} \\ 
	           X \ar[r]^{f} \ar[ur]^{F} & B }
\]
\vskip 2mm

\begin{proof} 
Assume first that $X$ is finite dimensional. Then there is a stratification
$	X=X_0\supset X_{1}\supset\cdots\supset X_l=\emptyset$ by closed 
complex subvarieties, with smooth
differences $S_j=X_j\bs X_{j+1}$, such that each connected component $S$ 
of every $S_j$ is mapped by $f$ to a stratum $B_k\bs B_{k+1}$ for some $k=k(j)$.
The  pull-back submersion 
\[
		f^*E= \{(x,e)\in X\times E\colon f(x)=\pi(e)\} \longmapsto X
\]
then satisfies the
assumptions of Theorem \ref{SES:OPS} with respect to this
stratification of $X$. Note that liftings $X\to E$ of $f\colon X\to B$
correspond to sections $X\to f^*E$, and hence the result follows
from Theorem \ref{SES:OPS}.
The general case follows by induction over an exhaustion of
$X$ by an increasing sequence of relatively compact Stein
Runge domains in $X$.
\end{proof}

A fascinating application of Theorem \ref{Lift}
has recently been found by Ivarsson and Kutzschebauch 
\cite{Ivarsson-Kutzschebauch,Ivarsson-Kutzschebauch2}
who solved the following {\em Gromov's Vaserstein problem}:

\begin{theorem}
\label{Ivar-Kut} 
{\rm (Ivarsson and Kutzschebauch 
\cite{Ivarsson-Kutzschebauch,Ivarsson-Kutzschebauch2})}
Let $X$ be a finite dimensional reduced Stein space and let 
$f\colon X\to {\rm SL}_m(\mathbb{C})$ be a null-homotopic holomorphic mapping.
Then there exist a natural number $N$ and holomorphic mappings 
$G_1,\dots, G_{N}\colon X\to \mathbb{C}^{m(m-1)/2}$ 
(thought of as lower resp.\ upper triangular matrices) such that
\[
		f(x) = \left(\begin{array}{cc} 1  &  0 \cr G_1(x) & 1 \cr \end{array} \right)  
		\left(\begin{array}{cc} 1 & G_2(x) \cr 0 & 1 \cr \end{array} \right)  \ldots 
		\left(\begin{array}{cc} 1 & G_N(x)\cr 0 & 1 \cr \end{array} \right)
\]
is a product of upper and lower diagonal unipotent matrices.
(For odd $N$ the last matrix has $G_N(x)$ 
in the lower left corner.)
\end{theorem}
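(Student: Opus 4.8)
The strategy is to reduce the Vaserstein factorization problem to an instance of the holomorphic lifting principle of Theorem~\ref{Lift}. First I would set up the relevant fibration. For each odd (or even) integer $N$ one has the holomorphic map
\[
  \Phi_N \colon \bigl(\C^{m(m-1)/2}\bigr)^N \longrightarrow \mathrm{SL}_m(\C),
  \qquad
  (g_1,\dots,g_N) \longmapsto L(g_1)\,U(g_2)\,L(g_3)\cdots,
\]
where $L(\cdot)$ and $U(\cdot)$ denote the lower- and upper-triangular unipotent matrices with the given entries. The image of $\Phi_N$, for $N$ large enough, is the whole of $\mathrm{SL}_m(\C)$ (this is the elementary fact that $\mathrm{SL}_m(\C)$ is generated by elementary matrices, made uniform), and in fact $\Phi_N$ is a surjective holomorphic submersion onto $\mathrm{SL}_m(\C)$ once $N$ is chosen suitably. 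Factoring $f\colon X\to\mathrm{SL}_m(\C)$ as in the statement is exactly the same as producing a holomorphic lifting $F=(G_1,\dots,G_N)\colon X\to (\C^{m(m-1)/2})^N$ of $f$ along $\Phi_N$.

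Next I would verify the hypotheses of Theorem~\ref{Lift} for $\pi=\Phi_N$. The key point is that $\Phi_N$ should admit (after a stratification of $\mathrm{SL}_m(\C)$ by closed subvarieties) the structure of a subelliptic, or even elliptic, submersion over each stratum; I expect in fact that one can arrange $\Phi_N$ to be a \emph{fiber bundle} over $\mathrm{SL}_m(\C)$ with fiber an affine-algebraic manifold that is elliptic — the fibers are naturally algebraic groups or homogeneous-like spaces built from unipotent groups and admit dominating sprays coming from one-parameter unipotent subgroups (flows of complete algebraic vector fields), so ellipticity is essentially automatic. Some care is needed because $\Phi_N$ need not be a submersion everywhere; this is precisely why the stratification in Theorem~\ref{Lift} is allowed, and one stratifies $\mathrm{SL}_m(\C)$ into locally closed smooth pieces over which $\Phi_N$ restricts to a nice submersion or bundle.

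Then I would address the homotopy-theoretic side. Theorem~\ref{Lift} deforms a \emph{continuous} lifting into a holomorphic one; so I must first produce a continuous lifting $F_0\colon X\to(\C^{m(m-1)/2})^N$ of $f$. This is where the hypothesis that $f$ is null-homotopic enters: $(\C^{m(m-1)/2})^N$ is contractible, and a continuous section of $\Phi_N$ over $X$ pulled back through $f$ exists provided the pull-back bundle $f^*E$ admits a continuous section, which — because the fibers are contractible and $f$ is null-homotopic, so $f^*E$ is a trivial-up-to-homotopy fibration with contractible fiber over a space homotopy equivalent to a finite CW complex (here finite-dimensionality of $X$ is used) — it does. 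One has to be slightly careful to get $N$ uniform over all of $X$ at the continuous level; this can be done using compactness of (a deformation retract of) $X$ together with the classical topological fact that elementary generation of $\mathrm{SL}_m$ works continuously in parameters.

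The main obstacle, and the part that will require genuine work rather than citation, is constructing the submersion $\Phi_N$ (choosing $N=N(m)$, or more precisely $N$ depending on the dimension of $X$) so that it is simultaneously \emph{surjective}, \emph{stratified-subelliptic} in the sense of Theorem~\ref{Lift}, and \emph{has contractible fibers over each stratum} so that the continuous lifting exists — these three demands pull in somewhat different directions and the bookkeeping of the stratification of $\mathrm{SL}_m(\C)$ together with the fiberwise ellipticity estimates is the technical heart of the Ivarsson–Kutzschebauch argument. Once $\Phi_N$ with these properties is in hand, the proof is: produce the continuous lifting from contractibility of the fibers and null-homotopy of $f$; apply Theorem~\ref{Lift} to deform it through liftings of $f$ to a holomorphic lifting $F=(G_1,\dots,G_N)$; and read off the desired factorization $f=L(G_1)U(G_2)\cdots$ directly from $\pi\circ F=f$. \qed
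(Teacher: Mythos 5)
Your reduction of the factorization to a lifting problem for the product map $\Phi_N\colon (\C^{m(m-1)/2})^N\to \mathrm{SL}_m(\C)$ matches the setup in the paper, but both of the substantive steps have gaps. First, the continuous lifting. You propose to obtain it from null-homotopy of $f$ together with \emph{contractibility of the fibers} of $\Phi_N$. The fibers are not contractible: the fiber over $b$ is the affine variety of all length-$N$ unipotent factorizations of the fixed matrix $b$, and the contractibility of the total space $(\C^{m(m-1)/2})^N$ says nothing about the topology of these fibers ($\Phi_N$ is very far from being a fibration with contractible fibers). The correct input here is Vaserstein's theorem \cite{Vaser} on reduction of matrices depending continuously on parameters, which is exactly where null-homotopy of $f$ and finite-dimensionality of $X$ are used; this is a genuinely nontrivial result, not an obstruction-theoretic triviality.

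Second, the holomorphic deformation. You propose a single application of Theorem~\ref{Lift} to $\pi=\Phi_N$, and you concede that verifying that $\Phi_N$ is a stratified subelliptic (or elliptic) submersion with the needed properties is ``the technical heart.'' But this is precisely the step that does not go through as stated: $\Phi_N$ itself is not known to be a stratified elliptic submersion, and its fibers are not unipotent groups or homogeneous spaces in any usable sense, so ellipticity is not ``essentially automatic.'' The Ivarsson--Kutzschebauch argument, as indicated in the paper, instead proceeds \emph{inductively}, applying Theorem~\ref{Lift} not to $\Phi_N$ but to a sequence of auxiliary submersions obtained by composing $\pi$ with certain row projections; it is for these auxiliary maps that one exhibits stratifications of the target into strata over which the restriction is an elliptic submersion. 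Peeling off the factorization one stage at a time is what makes the ellipticity verifiable, so the one-shot strategy is a wrong route rather than a missing computation.
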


In this application one takes $B=SL_m(\C)$, $E=(\C^{m(m-1)/2})^N$, and 
$\pi\colon E\to B$ is the map
\[
		\pi(G_1,G_2,\ldots,G_N)= 
		\left(\begin{array}{cc} 1  &  0 \cr G_1 & 1 \cr \end{array} \right)  
		\left(\begin{array}{cc} 1 & G_2 \cr 0 & 1 \cr \end{array} \right)  \ldots 
		\left(\begin{array}{cc} 1 & G_N\cr 0 & 1 \cr \end{array} \right).
\]
Every null-homotopic holomorphic map $f\colon X\to B=SL_m(\C)$ 
admits a continuous lifting $F\colon X\to E$ for a suitably chosen $N\in \N$
(Vaserstein \cite{Vaser}), and the goal is to deform $F$ 
to a holomorphic lifting $G=(G_1,\ldots,G_N) \colon X\to E$.
This is done inductively by applying Theorem \ref{Lift}
to auxiliary submersions obtained by composing 
$\pi$ with certain row projections. Stratified elliptic submersions 
naturally appear in their proof.

\section{Convex Approximation Property}
\label{Sec:CAP}
In this section we recall from \cite{FF:CAP} 
a characterization of Oka properties in terms of 
an Oka-Weil approximation property for entire
maps $\C^n\to Y$. 

Let $z=(z_1,\ldots,z_n)$, $z_j=x_j+\I\, y_j$, 
be complex coordinates on $\C^n$. 
Given numbers $a_j,b_j >0$ $(j=1,\ldots,n)$ we set
\begin{equation}
\label{cube}
    Q =\{z\in \C^n\colon |x_j| \le a_j,\ |y_j|\le b_j,\ j=1,\ldots,n\}.
\end{equation}

\begin{definition}
\label{special}
A {\em special convex set} in $\C^n$
is a compact convex set of the form
\begin{equation}
\label{special-convex}
    K =\{z\in Q \colon y_n \le \phi(z_1,\ldots,z_{n-1},x_n)\},
\end{equation}
where $Q$ is a cube (\ref{cube}) and
$\phi$ is a continuous concave function with values
in $(-b_n,b_n)$. Such $(K,Q)$ is called a
{\em special convex pair} in $\C^n$.
\end{definition}

%
%
%
%
\begin{definition}
\label{def:CAP}
A complex manifold $Y$ enjoys the {\em Convex Approximation Property}
{\rm (CAP)} if every holomorphic map $f\colon K\to Y$
on a special convex set $K \subset Q \subset \C^n$ (\ref{special-convex})
can be approximated, uniformly on $K$, by holomorphic maps
$Q \to Y$. 

$Y$ enjoys the {\em Parametric Convex Approximation Property} {\rm (PCAP)}
if for every special convex pair $(K,Q)$ and for every pair of parameter 
spaces $P_0\subset P$ as in Def.\ \ref{POP}, a map $f\colon Q \times P\to Y$ 
such that $f_p=f(\cdotp,p)\colon Q\to Y$ is holomorphic for every $p\in P_0$,
and is holomorphic on $K$ for every $p\in P$, can be approximated 
uniformly on $K\times P$ by maps $\wt f\colon Q\times P\to Y$
such that $\wt f_p$ is holomorphic on $Q$ for all $p\in P$,
and $\wt f_p=f_p$ for all $p\in P_0$.
\end{definition}

The following characterization of the Oka property 
was found in \cite{FF:CAP,FF:EOP} (for Stein source
manifolds), thereby answering a question of 
Gromov \cite[p.\ 881, 3.4.(D)]{Gromov:OP}.
For the extension to Stein source spaces see \cite{FF:Kohn}.

\begin{theorem} 
\label{CAP}
For every complex manifold we have 
\[   
	\BOP\ \Longleftrightarrow \CAP, \qquad \POP \Longleftrightarrow \PCAP.
\]
\end{theorem}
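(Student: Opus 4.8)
The plan is to establish the four implications, two of which are routine while two carry the content. For the routine directions, note that a special convex set $K\subset Q\subset\C^n$ is compact and convex, hence $\cO(\C^n)$-convex, that it admits convex (in particular contractible) \nbd s, and that $Q$ is itself convex. Given a \holo\ map $f\colon K\to Y$, i.e.\ one holomorphic on an open \nbd\ $\Omega\supset K$, I would pick a convex \nbd\ $U$ with $K\subset U\Subset\Omega$ and, using that $\overline U$ is contractible and $Y$ is an absolute neighborhood retract, extend $f|_{\overline U}$ to a continuous map $\wt f\colon\C^n\to Y$ agreeing with $f$ near $K$; applying \BOP\ with source $\C^n$ and the $\cO(\C^n)$-convex set $K$ and restricting the resulting \holo\ map $\C^n\to Y$ to $Q$ yields $\CAP$. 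The parametric case is the same, except that one cannot use $X=\C^n$, since the given maps $f_p$ ($p\in P_0$) are only holomorphic on a \nbd\ of $Q$: instead take $X$ a convex Stein domain with $Q\Subset X$ inside that \nbd, extend $\{f_p\}$ from $Q\times P$ to $X\times P$ continuously --- holomorphically on $X$ for $p\in P_0$ and on a \nbd\ of $K$ for all $p$ --- by a parametrized ANR/Tietze extension into a Hilbert space (the point of Proposition \ref{local-ext}), and then invoke \POP.

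The substance is in the converses $\CAP\Rightarrow\BOP$ and $\PCAP\Rightarrow\POP$, which I would prove by the exhaustion induction. Fix a Stein space $X$, a compact $\cO(X)$-convex set $K$, a closed complex subvariety $X'$, and a continuous $f\colon X\to Y$ holomorphic on a \nbd\ of $K\cup X'$. Choose a smooth \spsh\ exhaustion $\rho\ge 0$ on $X$ with $K\subset\{\rho<0\}$, put $X_c=\{\rho\le c\}$, and build a homotopy $f^t$, as a locally finite concatenation of small deformations indexed by the values of $\rho$, which fixes $X'$, is unchanged near $K$ after the first step, and deforms $f$ to a map holomorphic on successively larger sublevel sets; controlling the approximation rates makes the limit exist and be \holo\ on $X$. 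Each step enlarges the set of holomorphy from some $X_a$ to some $X_b$ and is of one of two types.

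In the \emph{noncritical step} ($[a,b]$ contains no critical value of $\rho$) I would write $X_b$ as $X_a$ with finitely many small convex bumps attached, each bump $B$ (and its overlap with the already-built piece) lying, in suitable local \holo\ coordinates on $X$, inside a special convex pair, and small enough that the current map sends $B$ into a coordinate chart of $Y$. Over $B$ one uses $\CAP$ (resp.\ $\PCAP$) to approximate the restriction of the current map by a map holomorphic on a \nbd\ of $B$, and then glues it to the old map across the Cartan-pair overlap by a nonlinear Cartan splitting: being $\cC^1$-close over the overlap and valued in a chart, the two maps differ by a transition that splits --- after composing with the translation spray $(y,v)\mapsto y+v$ on the chart --- as a composition of a map close to the identity over $X_a$ with one over $B$. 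Finitely many bumps cross the band. In the \emph{critical step} (one nondegenerate critical point of Morse index $k\le n$), $X_b$ is $X_a$ with a handle attached along a core disc $D$, which may be taken totally real; I would first deform $f$ to be holomorphic on a \nbd\ of $X_a\cup D$ --- a local extension into $Y$ over $D$, using that $D$ has a basis of Stein neighborhoods in which it is $\cO$-convex --- and then reduce to a noncritical step. Interpolation on $X'$ is carried through every local step (Cartan's extension theorem locally, with all sprays and gluings fixing $X'$); in the parametric case every use of $\PCAP$ and every gluing is arranged to depend continuously on $p\in P$ and to be the identity for $p\in P_0$.

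The hard part will be the nonlinear gluing lemma of the noncritical step: one must make sense of the ``difference'' of two maps into $Y$ over the overlap of a Cartan pair and split it, with estimates uniform in all auxiliary parameters (including $p\in P$) and preserving the condition on $X'$, all without any global flexibility hypothesis on $Y$ --- so the dominating spray needed for the split must be produced locally from a coordinate chart, which forces the bumps to be small and the induction to be organized with care. A secondary difficulty is the critical (handle) step together with the passage to the limit: the neighborhoods and approximation speeds must be chosen so that holomorphy near $K$ and the fixed behavior on $X'$ survive and the limiting map is genuinely \holo\ on $X$, and --- for the parametric statement --- so that no regularity of the pair $(P,P_0)$ beyond compactness is needed, which is exactly what the Hilbert-space Tietze argument provides.
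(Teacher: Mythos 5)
The paper does not actually prove Theorem \ref{CAP}: it is imported from \cite{FF:CAP,FF:EOP} (and \cite{FF:Kohn} for singular Stein source spaces), so there is no internal proof to compare yours against. Judged on its own terms, your proposal correctly reconstructs the architecture of the proof in \cite{FF:CAP}: the directions $\BOP\Rightarrow\CAP$ and $\POP\Rightarrow\PCAP$ are indeed routine because $K$ and $Q$ are convex (so $K$ is $\cO(X)$-convex in a convex Stein $X\supset Q$, and a continuous extension of $f$ is obtained by retracting onto a convex \nbd\ of $K$), while the converses go by induction over sublevel sets of a \spsh\ exhaustion, with noncritical extensions across small convex bumps where $\CAP$ (resp.\ $\PCAP$) supplies the Runge-type approximation, a critical step across a totally real handle handled by Mergelyan-type approximation, and a Cartan-pair gluing at each bump.

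The one step you describe in a way that would fail as stated is the gluing. You propose to form the ``difference'' of the two maps, and to produce the dominating spray needed for the splitting, from a coordinate chart of $Y$ containing the image of the small bump $B$. But the correction coming out of the splitting must be applied to the current map over all of $A=X_a$, whose image is in general not contained in any chart of $Y$, so ``adding'' the $A$-component of the split difference to that map is meaningless there. In \cite{FF:CAP} this is resolved by thickening the map over a \nbd\ of $A$ into a \emph{local dominating spray of maps} $A\times r\B^N\to Y$ having the given map as its core: one takes a Stein \nbd\ of the graph in $X\times Y$ (Siu's theorem), embeds it into a Euclidean space, and uses a holomorphic retraction; the transition between the $A$-spray and the $B$-spray is then split in the spray parameter, where an additive Cartan-type decomposition with uniform bounds (and continuous dependence on $p\in P$, equal to the identity for $p\in P_0$) does make sense, and the glued map is read off by composing. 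With that replacement your outline matches the cited proof; the remaining points you flag (shrinking neighborhoods, convergence of the infinite concatenation, interpolation on $X'$, and the Hilbert-space Tietze extension to avoid regularity hypotheses on $(P,P_0)$) are handled exactly as you indicate.
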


\begin{remark}
The implication $\PCAP \Longrightarrow \POP$
also holds for a specific pair of (compact, Hausdorff) 
parameter spaces as is seen from the proof in \cite{FF:CAP}. 
More precisely, if a complex manifold $Y$
enjoys PCAP for a certain pair $P_0\subset P$, then
it also satifies POP for that same pair.
\qed\end{remark}

\section{A Parametric Oka Principle for liftings}
\label{SES;liftings}
In this section we prove the main result of this paper,
Theorem \ref{SES:lifting-maps}, which generalizes Theorem \ref{Lift}
to families of holomorphic maps.
We begin by recalling the relevant terminology from \cite{FPrezelj:OP3}.

%
%
\begin{definition}
\label{P-section}
Let $h\colon Z\to X$ be a holomorphic map of complex spaces,
and let $P_0 \subset P$ be topological spaces.
\begin{itemize}
\item[(a)] A {\em $P$-section} of $h \colon Z\to X$ is a
continuous map $f\colon X\times  P\to Z$ such that
$f_p=f(\cdotp,p)\colon X\to Z$ is a section of $h$ for each 
$p\in P$. Such $f$ is {\em holomorphic} if 
$f_p$ is holomorphic on $X$ for each fixed $p\in P$.
If $K$ is a compact set in $X$ and if $X'$ is a closed 
complex subvariety of $X$, then $f$ 
is {\em holomorphic on $K\cup X'$} if there is an open set 
$U\subset X$ containing $K$ such that the restrictions 
$f_p|_U$ and $f_p|_{X'}$ are holomorphic for every $p\in P$.
\item[(b)] A {\em homotopy of $P$-sections} is a continuous 
map $H\colon X\times P\times [0,1]\to Z$ such
that $H_t=H(\cdotp,\cdotp,t) \colon X\times P\to Z$ is a
$P$-section for each $t\in [0,1]$. 
\item[(c)] A {\em $(P,P_0)$-section} of $h$ is a $P$-section
$f\colon X\times  P\to Z$ such that $f_p=f(\cdotp,p)\colon X\to Z$
is holomorphic on $X$ for each $p\in P_0$.
A $(P,P_0)$-section is holomorphic on a subset $U\subset X$
if $f_p|_U$ is holomorphic for every $p\in P$.
\item[(d)] A $P$-map $X\to Y$ to a complex space $Y$ is a map
$X\times P\to Y$. Similarly one defines $(P,P_0)$-maps and
their homotopies.
\end{itemize}
\end{definition}

\begin{theorem}
\label{SES:lifting-maps}
Assume that $E$ and $B$ are complex spaces and $\pi \colon E\to B$ 
is a subelliptic submersion (Def.\ \ref{def:SSF}), or a holomorphic 
fiber bundle with {\rm POP} fiber (Def.\ \ref{POP}). 
Let $P$ be a compact Hausdorff space and
$P_0$ a closed subspace of $P$. 
Given a Stein space $X$, a compact $\cO(X)$-convex subset $K$ of $X$,
a closed complex subvariety $X'$ of $X$, a holomorphic 
$P$-map $f \colon X\times P \to B$, 
and a $(P,P_0)$-map $F\colon X\times P \to E$ that is 
a $\pi$-lifting of $f$ $(\pi\circ F=f)$ and is holomorphic on 
(a neighborhood of) $K$ and on the subvariety $X'$, 
there exists a homotopy of liftings 
$F^t\colon X\times P \to E$ of $f$ $(t\in [0,1])$ 
that is fixed on $(X\times P_0)\cup (X'\times P)$, that approximates 
$F=F^0$ uniformly on $K\times P$, and such that $F^1_p$ is holomorphic 
on $X$ for all $p\in P$.

If in addition $F$ is holomorphic in a neighborhood of $K\cup X'$
then the homotopy $F^t$ can be chosen such that it agrees with
$F^0$ to a given finite order along $X'$. 
\end{theorem}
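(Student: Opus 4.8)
\textbf{Proof plan for Theorem \ref{SES:lifting-maps}.}
The plan is to reduce the lifting problem to the Oka principle for sections of a family of subelliptic submersions, i.e. to Theorem \ref{SES:OPS}, exactly as in the proof of Theorem \ref{Lift} but now carrying the parameter space $P$ through the argument. First I would form the pull-back
\[
	f^*E = \{(x,p,e)\in X\times P\times E \colon f(x,p)=\pi(e)\},
\]
which, because $f$ is only a $P$-map (holomorphic in $x$ for each fixed $p$ but merely continuous in $p$), is not a single complex space but a continuous family over $P$ of holomorphic submersions $f_p^*E\to X$; a lifting $F$ of $f$ is precisely a $P$-section of this family, holomorphic exactly where $F$ is. Since $X$ is Stein and finite dimensional (the general case following by the usual exhaustion of $X$ by relatively compact Stein Runge domains and a diagonal argument), I would choose a stratification $X=X_0\supset X_1\supset\cdots\supset X_l=\emptyset$ by closed complex subvarieties, adapted simultaneously to $X'$ and to the stratification $B=B_0\supset\cdots\supset B_m=\emptyset$ from Definition \ref{def:SSF}/Theorem \ref{SES:lifting-maps}, so that each connected component $S$ of each difference $X_j\setminus X_{j+1}$ is smooth and is mapped by \emph{every} $f_p$ into a single stratum $B_{k}\setminus B_{k+1}$; over such $S$ the submersion $f_p^*E\to S$ is subelliptic (or a holomorphic fiber bundle with POP fiber) for each $p$, and the sprays, being pulled back from a neighborhood in $B$, depend continuously on $p$ in the sense required by the parametric theory.

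The core of the argument is then an inductive descent over the strata, at each stage extending a holomorphic $(P,P_0)$-section from the $(k+1)$-st skeleton and the previously handled part to the $k$-th skeleton, while approximating the old section on $K$ and keeping it fixed over $X\times P_0$ and over $X'\times P$. This is exactly the content of the proof of Theorem \ref{SES:OPS}, and the only new point is bookkeeping: the same gluing, the Oka--Weil--Cartan step on special convex pairs (provided by PCAP via Theorem \ref{CAP}), and the homotopy-theoretic step of extending sections over the relevant CW-type pairs all go through with the compact Hausdorff parameter $P$ in place, using the Tietze extension theorem for maps into Hilbert spaces (as noted in the Remark following Theorem \ref{SES:OPS}) to dispose of any SDNR hypothesis on $P_0\subset P$. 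The map $f$ being holomorphic in $x$ for each $p$ is what makes $f_p^*E$ a genuine complex space for each fixed $p$, so that the fiberwise theory applies; the continuity in $p$ is handled by the parametric machinery throughout.

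The main obstacle I anticipate is \emph{not} any single analytic estimate but rather organizing the parametric induction so that the homotopy produced on a stratum can be patched with the one already constructed on lower strata without destroying the interpolation conditions on $X'$ and the approximation on $K$: one must use a parametrized version of the Cartan-type splitting/gluing lemma for maps into the fibers, valid uniformly in $p\in P$, and verify that the cutoff and patching can be performed continuously in $p$. Concretely, this is where the family of sprays $\{s_j^{(p)}\}$ and the continuity of their domination enter: one linearizes the gluing over each spray bundle, solves the resulting parametric $\overline\partial$-type (or division) problem with bounds independent of $p$, and composes back. Once this parametric gluing step is in hand, the descent over the finitely many strata, and then the exhaustion argument in the infinite-dimensional case, are routine.

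For the final assertion, that when $F$ is already holomorphic near $K\cup X'$ the homotopy may be taken to agree with $F^0$ to a prescribed finite order $r$ along $X'$, I would replace $X'$ throughout by the $r$-th infinitesimal neighborhood: formally, one works with the ideal sheaf $\mathcal{I}_{X'}^{\,r+1}$ and requires all corrections to lie in it. Since jets of a fixed finite order along a subvariety are controlled by a coherent sheaf, the interpolation step in each stratum can be strengthened to $r$-jet interpolation along $X'$ at no extra cost—this is standard in the Oka-principle literature and already implicit in the interpolation version of BOP/POP recalled in Section \ref{Sec:Oka}. No new idea is needed beyond tracking this jet condition through the same induction.
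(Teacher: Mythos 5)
Your overall strategy --- pass to the family of pull-backs $f_p^*E\to X$ (equivalently, the graph construction $Z=X\times E\to\wt Z=X\times B$ used in the paper) and rerun the parametric Oka machinery with the extra parameter --- is the paper's strategy. But there is a concrete problem with one step, and the claim that the rest is ``exactly Theorem \ref{SES:OPS} plus bookkeeping'' hides the two places where new work is actually needed. First, Theorem \ref{SES:lifting-maps} carries no stratification of $B$: the hypothesis is that $\pi$ is subelliptic (or a fiber bundle with POP fiber) over all of $B$. The stratification $B=B_0\supset\cdots\supset B_m$ you invoke belongs to Theorem \ref{Lift}, not to this statement. This matters because the step you propose --- a stratification of $X$ each of whose strata is mapped by \emph{every} $f_p$ into a single stratum of $B$ --- is in general impossible for a continuous family $\{f_p\}_{p\in P}$: the induced stratifications of the pull-backs $f_p^*E\to X$ change discontinuously in $p$. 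This is precisely the obstruction recorded in the Remark at the end of Section \ref{SES;liftings}, and it is the reason the theorem is \emph{not} stated for stratified submersions. In the actual (unstratified) setting your condition is vacuous and only the stratification coming from $\mathrm{Sing}(X)$ and $X'$ is needed, so your plan survives --- but only because the hypothesis you added is absent.

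Second, the reduction to Theorem \ref{SES:OPS} is not literal, because the target of the section problem varies with $p$, and two genuinely new ingredients are required. (a) An initial step (Proposition \ref{local-ext} in the paper) producing a homotopy of liftings that is holomorphic for all $p$ in a \emph{neighborhood} $P_0'$ of $P_0$; this uses Siu's Stein neighborhood theorem applied to $f_{p_0}(X)$, bounded linear extension operators into Bergman spaces, and the Michael--Dowker selection theorem (the Hilbert-space Tietze theorem you mention in passing --- but its role is to manufacture this extension, not merely to dispose of an SDNR hypothesis). Without this step the induction has nothing to keep fixed near $P_0$. (b) Because the dominating spray families exist only over open sets $U_l$ covering $\bigcup_{p\in P}f_p(\overline{L\setminus K})$, and the images $f_p(A_k)$ of the Cartan-string members move with $p$, one must cover $P$ by compacts $P_1,\dots,P_m$ with $\bigcup_{p\in P_j'}f_p(A_k)\subset U_{l(j,k)}$ and run a stepwise extension over the parameter space, patching with cut-off functions $\chi(p)$ so that holomorphicity already achieved over $P_0\cup P_1\cup\cdots\cup P_{j-1}$ is preserved. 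Your ``parametric gluing'' paragraph addresses uniformity in $p$ of the analytic substeps (which is already available in the cited earlier papers), but not this covering-of-$P$ issue, which is where the actual new difficulty of the parametric lifting problem lies.
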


\vskip -6mm
\[
	\xymatrix{ & E \ar[d]^{\pi} \\ 
	           X\times P \ar[r]_{f} \ar[ur]^{F_t} & B }
\]

\begin{definition}
\label{POPmaps}
A map $\pi \colon E\to B$ satisfying the conclusion of 
Theorem \ref{SES:lifting-maps} is said to enjoy the
parametric Oka property (c.f.\ L\'arusson \cite{Larusson2,Larusson3,Larusson4}).
\qed \end{definition}

\begin{proof}
The first step is a reduction to the graph case. 
Set $Z=X\times E$, $\wt Z=X\times B$, 
and let $\wt \pi \colon Z\to \wt  Z$ denote the map 
\[
		\wt \pi(x,e)=(x,\pi(e)),\quad x\in X,\ e\in E.
\]
Then $\wt \pi$ is a subelliptic submersion, resp.\ 
a holomorphic fiber bundle with POP fiber.
Let $\wt h\colon \wt Z= X\times B\to X$ 
denote the projection onto the first factor,
and let $h=\wt h\circ \wt \pi\colon Z\to X$.
To a $P$-map $f\colon X\times P \to B$ we associate
the $P$-section $\wt f(x,p)=(x,f(x,p))$ 
of $\wt h\colon \wt Z\to X$.
Further, to a lifting $F\colon X\times P\to E$ of $f$ 
we associate the $P$-section $\wt F(x,p)=(x,F(x,p))$ of $h\colon Z\to X$. 
Then $\wt \pi \circ \wt F = \wt f$.
This allows us to drop the tilde's on $\pi$, $f$ and $F$ 
and consider from now on the following situation:
\begin{itemize}
\item[(i)]    $Z$ and $\wt Z$ are complex spaces, 
\item[(ii)]   $\pi \colon Z \to \wt  Z$ is  a subelliptic submersion, or a holomorphic fiber 
bundle with \POP\ fiber,
\item[(iii)]  $\wt h \colon \wt Z\to X$ is a holomorphic map onto a Stein space $X$,
\item[(iv)]   $f\colon X\times P\to \wt Z$ is a holomorphic $P$-section of $\wt h$, 
\item[(v)]    $F\colon X\times P\to Z$ is a holomorphic $(P,P_0)$-section
of $h= \wt h\circ \pi \colon Z \to X$ such that $\pi\circ F=f$, and
$F$ is holomorphic on $K\cup X'$.
\end{itemize}
We need to find a homotopy $F^t\colon X\times P\to Z$ $(t\in[0,1])$
consisting of $(P,P_0)$-sections of $h \colon Z \to X$ such that $\pi \circ F^t=f$
for all $t\in[0,1]$, and 
\begin{itemize}
\item[($\alpha$)] $F^0=F$, 
\item[($\beta$)] $F^1$ is a holomorphic $P$-section, and
\item[($\gamma$)] for every $t\in [0,1]$, $F^t$ is holomorphic on $K$,
it is unifomly close to $F^0$ on $K\times P$, and
it agrees with $F^0$ on $(X\times P_0)\cup (X'\times P)$.
\end{itemize}
\vskip -3mm
\[
	\xymatrix{ & Z \ar[d]^{\pi} \\ 
	           X \times P \ar[r]_{f} \ar[ur]^{F^t} & \wt Z}
\]

Set $f_p=f(\cdotp,p)\colon X\to \wt Z$ for $p\in P$. The image $f_p(X)$ 
is a closed Stein subspace of $\wt Z$ that is biholomorphic to $X$
(since $\wt h\circ f$ is the identity on $X$). 

When $P=\{p\}$ is a singleton, there is only one section $f=f_p$,
and the desired conclusion follows by applying Theorem \ref{SES:OPS}
to the restricted submersion $\pi\colon Z|_{f(X)} \to f(X)$.

In general we consider the family of restricted 
submersions $Z|_{f_p(X)}\to f_p(X)$  $(p\in P)$.
The proof of the parametric Oka principle 
\cite[Theorem 1.4]{FPrezelj:OP2} requires certain modifications 
that we now explain. It suffices to obtain a homotopy 
$F^t$ of liftings of $f$ over a relatively compact subset 
$D$ of $X$ with $K\subset D$; the proof is then finished by 
induction over an exhaustion of $X$.
The initial step is provided by the following proposition.
(No special assumption is needed on the submersion 
$\pi\colon Z\to \wt Z$ for this result.)

\begin {proposition}
\label{local-ext}
{\rm (Assumptions as above)}
Let $D$ be an open relatively compact set in $X$ with $K\subset D$.
There exists a homotopy of liftings of $f$ over $D$ 
from $F=F^0|_{D\times P}$ to a lifting $F'$ such that 
properties ($\alpha$) and ($\gamma$) hold for $F'$, 
while $(\beta)$ is replaced by
\begin{itemize}
\item[($\beta$')] $F'_p$ is holomorphic on $D$ for all $p$ in a neighborhood 
$P'_0\subset P$ of $P_0$.
\end{itemize}
\end{proposition}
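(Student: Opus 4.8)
\emph{Plan.} The content of the proposition is to spread the holomorphy (in $x$) of the lifting from the parameter set $P_0$, where it holds over all of $X$, to a neighborhood of $P_0$ in $P$, at the price of restricting $X$ to the relatively compact set $D$. I would do this in three steps: linearize using the submersion structure of $\pi$, build an auxiliary lifting that is holomorphic in $x$ for \emph{all} $p$ near $P_0$, and glue it back to $F$ by a cutoff in the parameter. For the linearization: since $\pi\colon Z\to\wt Z$ is a holomorphic submersion and $F(\overline D\times P)$ is compact in $Z$, cover this set by finitely many charts in which $\pi$ is the projection $\Omega\times\C^q\to\Omega$, $\Omega\subset\wt Z$ open; in such a chart a $\pi$-lifting of $f$ is the same datum as a $\C^q$-valued function of $(x,p)$, and it is holomorphic in $x$ iff that function is. It is convenient to record the holomorphic-in-$x$ liftings over $D$ as continuous maps from the parameter into a Hilbert space $\cH$ of $\C^q$-valued holomorphic functions on a neighborhood of $\overline D$ (an $L^2$/Bergman-type space). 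This is exactly the ingredient that lets one dispense with the SDNR hypothesis on $P_0$ used in earlier papers: the Tietze extension theorem is available for $\cH$-valued maps defined on an arbitrary closed subset of a compact Hausdorff space.

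\emph{The auxiliary lifting $G$.} I would construct a $P$-section $G$ of $h$ over $D$, defined for $p$ in some neighborhood $P_1$ of $P_0$ in $P$, with $\pi\circ G=f$, holomorphic in $x$ for every $p\in P_1$, and with $G=F$ on $(D\times P_0)\cup\bigl((X'\cap D)\times P_1\bigr)$. For a fixed $p\in P_0$ one simply takes $G_p=F_p$, which is already holomorphic on all of $X$; for a fixed $p\in P_1$ a holomorphic section of $Z|_{f_p(D)}\to f_p(D)$ agreeing with $F_p$ along $X'\cap D$ exists by Theorem~\ref{SES:OPS} applied, with trivial parameter, to the restriction of $\pi$ over the Stein space $f_p(X)\cong X$. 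The parametric content — continuity of $p\mapsto G_p$, exact agreement with $F$ along $P_0$ and along $X'$ — is then arranged by representing these sections in the charts above as $\cH$-valued data, extending the given data from the closed set $(D\times P_0)\cup\bigl((X'\cap D)\times P_1\bigr)$ by the $\cH$-valued Tietze theorem, and patching across charts by a Cartan-type argument as in the proof of the Oka principle for submersions. Because $G=F$ on $D\times P_0$, automatically $G$ is $\cC^0$-close to $F$ over $\overline D$ for $p$ near $P_0$.

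\emph{Gluing.} Choose a continuous $\chi\colon P\to[0,1]$ with $\chi\equiv1$ on a neighborhood $P_0'$ of $P_0$ and with $\mathrm{supp}\,\chi$ contained in a neighborhood of $P_0$ so small that $G_p$ is as $\cC^0$-close to $F_p$ over $\overline D$ as we wish for all $p$ in that support. In each chart above, form the convex combination $(1-\chi(p))F_p+\chi(p)G_p$ of the two $\C^q$-valued representatives — this is well defined into $Z$ because the representatives are close wherever $\chi\ne0$, both lying near the holomorphic section $F_q$ — call the result $F'_p$, and let $F^t$ be the straight-line homotopy from $F=F^0$ to $F'=F^1$ built the same way. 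Then $\pi\circ F^t=f$ for all $t$; $F^t=F$ on $D\times P_0$ (where $\chi=1$ and $G=F$) and on $(X'\cap D)\times P$ (where $G=F$); $F^t$ is uniformly close to $F$ on $K\times P$ (it equals $F$ where $\chi=0$ and is close to $F$ where $\chi\ne0$); $F^t$ is holomorphic in $x$ on a neighborhood of $K$ for every $t$ (both $F$ and $G$ are); and $F'_p=G_p$ is holomorphic on $D$ for every $p\in P_0'$. This yields $(\alpha)$, $(\gamma)$ and $(\beta')$.

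\emph{Main obstacle.} I expect the construction of $G$ to be the crux: one must simultaneously upgrade holomorphy in $x$ from $P_0$ to a full neighborhood of $P_0$ over $D$, keep exact agreement with $F$ along $X'$ and along $P_0$, and make everything depend continuously on $p$. Each ingredient — local holomorphic liftings from the submersion structure, Cartan patching and extension over the relatively compact Stein set, and the Hilbert-valued Tietze theorem in the parameter — is standard, but interleaving them cleanly, and verifying that the local convex combinations in the gluing step genuinely define a global section of $h$ (which rests on $G$ and $F$ being close precisely on the support of $\chi$), is the delicate part.
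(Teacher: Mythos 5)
Your overall architecture (encode holomorphic liftings as maps into a Hilbert space of holomorphic functions, extend from $P_0$ by the Hilbert-valued Tietze theorem, then glue back to $F$ with a cutoff $\chi(p)$ in the parameter) matches the paper's, but the step you yourself flag as the crux --- the construction of the continuous family $G_p$ of holomorphic liftings for $p$ in a neighborhood of $P_0$ --- is not actually carried out, and the route you sketch for it would not work as stated. Applying Theorem~\ref{SES:OPS} separately for each fixed $p\in P_1$ produces sections with no continuity in $p$ (the theorem gives existence up to homotopy, not a canonical choice), and it also imports the subellipticity/POP hypotheses, which the proposition does not need: the paper explicitly notes that no special assumption on the submersion $\pi\colon Z\to\wt Z$ is required here. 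The phrase ``patching across charts by a Cartan-type argument'' hides exactly the difficulty. The paper avoids chart-by-chart patching in the $x$-variable altogether: by Siu's theorem the Stein subspace $F_{p_0}(X)$ has an open Stein neighborhood in $Z$, which is embedded in a single $\C^N$; one then works with \emph{one} bounded linear extension operator $S\colon H^\infty(X\cap\Omega')\to H^2(\Omega)$ into a Bergman space on a fixed pseudoconvex $\Omega\subset\C^N$, performs the Tietze extension there, and returns to genuine liftings of $f$ via a holomorphic retraction onto the fibers of $\pi$. In particular no local trivializations of $\pi$ in the $x$-direction ever need to be matched.

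Two further gaps. First, your gluing step takes the convex combination $(1-\chi(p))F_p+\chi(p)G_p$ of the $\C^q$-valued representatives \emph{in each chart}; since the transition maps between charts are not affine, this is chart-dependent and does not define a global section, closeness of $F_p$ and $G_p$ notwithstanding. The paper's version of this step is taken in the single ambient $\C^N$ and followed by the holomorphic retraction onto the fibers of $\pi$, which simultaneously makes it well defined and preserves $\pi\circ F'=f$. Second, you do not explain how the Tietze extension is made to respect the interpolation condition along $X'$ for \emph{all} $p\in P$; the paper does this by subtracting the second extension operator $S'(g_p|_{X'\cap\Omega'})$ so that the map being extended lands in the closed subspace $H^2_{X'}(\Omega)$ of functions vanishing on $X'$, and then restores the $X'$-data afterwards, with a final Cartan~B plus Oka--Weil correction (writing $g_p-G_p=\sum_j\phi_j\xi_{j,p}$ near $K$ and approximating the $\xi_{j,p}$) to recover the approximation on $K$. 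Your observation that closeness on $K\times P$ can instead be arranged by shrinking the support of $\chi$ is sound as far as it goes, but it does not substitute for the missing $X'$-interpolation mechanism or for a well-defined gluing.
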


The existence of such local holomorphic extension $F'$ is used at several 
subsequent steps. We postpone the proof of the proposition 
to the end of this section and continue with the 
proof of Theorem \ref{SES:lifting-maps}.
Replacing $F$ by $F'$ and $X$ by $D$, we assume from now on that 
$F_p$ is holomorphic on $X$ for all $p\in P'_0$ (a neighborhood of $P_0$). 

Assume for the sake of discussion that $X$ is a Stein manifold,
that $X'=\emptyset$, and that $\pi \colon Z\to \wt Z$ is a 
subelliptic submersion. (The proof in the fiber bundle case is 
simpler and will be indicated along the way. The case when $X$
has singularities or $X'\ne \emptyset$ uses the 
induction scheme from \cite{FF:Kohn}, but the details
presented here remain unchanged.)
It suffices to explain the following:

\smallskip
{\bf Main Step:} Let $K\subset L$ be compact strongly pseudoconvex 
domains in $X$ that are $\cO(X)$-convex.
Assume that $F^0= \{F^0_p\}_{p\in P}$ is a $\pi$-lifting of $f= \{f_p\}_{p\in P}$
such that $F^0_p$ is holomorphic on $K$ for all $p\in P$, 
and $F^0_p$ is holomorphic on $X$ when $p\in P'_0$. 
Find a homotopy of liftings $F^t  = \{F^t_p\}_{p\in P}$ $(t\in[0,1])$ 
that are holomorphic on $K$, uniformly close to $F^0$ on $K\times P$, 
the homotopy is fixed for all $p$ in a neighborhood of $P_0$, and $F^1_p$ 
is holomorphic on $L$ for all $p\in P$.
\smallskip

Granted the Main Step, a solution satisfying the conclusion
of Theorem \ref{SES:lifting-maps} is then obtained by induction 
over a suitable exhaustion of $X$.

\smallskip
\noindent {\em Proof of Main Step.}
We cover the compact set 
$\bigcup_{p\in P} f_p(\overline{L\bs K})\subset \wt Z$
by open sets $U_1,\ldots, U_N\subset \wt Z$ 
such that every restricted submersion $\pi\colon Z|_{U_j}\to U_j$ 
admits a finite dominating family of $\pi$-sprays.
In the fiber bundle case we choose the sets $U_j$ such that $Z|_{U_j}$
is isomorphic to the trivial bundle $U_j\times Y \to U_j$
with POP fiber $Y$.

Choose a Cartan string $\cA=(A_0,A_1,\ldots,A_n)$ in $X$ 
\cite[Def.\ 4.2]{FPrezelj:OP2} such that $K=A_0$ and $L=\bigcup_{j=0}^n A_j$. 
The construction is explained in \cite[Corollary 4.5]{FPrezelj:OP2}: 
It suffices to choose each of the compact  sets $A_k$ 
to be a small strongly pseudoconvex domain such that 
$\left(\bigcup_{j=0}^{k-1} A_j,A_k\right)$ is a Cartan pair
for all $k=1,\ldots,n$. In addition, we choose the sets  
$A_1,\ldots, A_n$ small enough such that $f_p(A_j)$ is contained 
in one of the sets $U_l$ for every $p\in P$ and $j=1,\ldots, n$. 

We cover $P$ by compact subsets $P_1,\ldots,P_m$ 
such that for every $j=1,\ldots,m$ and $k=1,\ldots, n$, 
there is a neighborhood $P'_j\subset P$ of $P_j$ such that the set 
$
		\bigcup_{p\in P'_j} f_p(A_k) 
$
is contained in one of the sets $U_l$. 

As in \cite{FPrezelj:OP2} 
we denote by $\cK(\cA)$ the {\em nerve complex} of $\cA=(A_0,A_1,\ldots,A_n)$,
i.e., a combinatorial simplicial complex consisting of all
multiindices $J=(j_0,j_1,\ldots,j_k)$, with 
$0\le j_0<j_1<\cdots < j_k \le n$, such that
\[
    A_J=A_{j_0}\cap A_{j_1}\cap\cdots\cap A_{j_k} \ne \emptyset.
\]
Its {\em geometric realization}, $K(\cA)$, is a finite polyhedron 
in which every multiindex $J=(j_0,j_1,\ldots,j_k)\in \cK(\cA)$
of length $k+1$ determines a closed $k$-dimensional face
$|J|\subset K(\cA)$, homeomorphic to the standard $k$-simplex in $\R^k$, 
and every $k$-dimensional face of $K(\cA)$ is of this form. 
The face $|J|$ is called the {\em body} (or {\em carrier})
of $J$, and $J$ is the {\em vertex scheme} of $|J|$.
Given $I,J\in \cK(\cA)$ we have $|I|\cap |J|=|I\cup J|$. 
The vertices of $K(\cA)$ correspond to the individual sets $A_j$ in $\cA$,
i.e., to singletons $(j) \in \cK(\cA)$. 
(See \cite{Hurewicz-Wallman} or \cite{Spanier} for 
simplicial complexes and polyhedra.)

Given a compact set $A$ in $X$, we denote by $\Gamma_\cO(A,Z)$ 
the space of all sections of $h\colon Z\to X$ that are holomorphic over 
some unspecified open neighborhood $A$ in $Z$, in the sense of germs at $A$. 

Recall that a {\em holomorphic $\cK(\cA,Z)$-complex} 
\cite[Def.\ 3.2]{FPrezelj:OP2} is a continuous family of 
holomorphic sections
\[
  F_* = \{F_J \colon |J| \to \Gamma_\cO(A_J,Z),\ \ J \in \cK(\cA) \}
\]
satisfying the following compatibility conditions:
\[
    I,J\in \cK(\cA),\ I\subset J \Longrightarrow
      F_J(t) = F_I(t)|_{A_J} \ (\forall t\in |I|).
\]
Note that 
\begin{itemize}
\item $F_{(k)}$ a holomorphic section over (a neighborhood of) $A_k$, 
\item $F_{(k_0,k_1)}$ is a homotopy of holomorphic sections over 
$A_{k_0}\cap A_{k_1}$ connecting $F_{(k_0)}$ and $F_{(k_1)}$, 
\item $F_{(k_0,k_1,k_2)}$ is a triangle of homotopies with vertices 
$F_{(k_0)},F_{(k_1)},F_{(k_2)}$ and sides $F_{(k_0,k_1)},F_{(k_0,k_2)},F_{(k_1,k_2)}$,
etc. 
\end{itemize}
Similarly one defines a continuous $\cK(\cA,Z)$-complex.

A $\cK(\cA,Z;P)$-complex is defined in an obvious way by 
adding the parameter $p\in P$. It can be  viewed as a 
$\cK(\cA,Z)$-complex of $P$-sections of $Z\to X$, 
or as a family of $\cK(\cA,Z)$-complexes
depending continuously on the parameter $p\in P$.
Similarly, a $\cK(\cA,Z;P,P_0)$-complex is a $\cK(\cA,Z;P)$-complex 
consisting of holomorphic sections (over the set $L=\bigcup_{j=0}^n A_j$)
for the parameter values $p\in P_0$. The terminology of 
Def.\ \ref{P-section} naturally applies to complexes of sections.

By choosing the sets $A_1,\ldots, A_n$ sufficiently small
and by shrinking the neighborhood $P'_0$ (furnished by
Proposition \ref{local-ext}) around $P_0$ if necessary 
we can deform $F=F^0$ to a holomorphic $\cK(\cA,Z;P,P'_0)$-complex 
$F_{*,*} = \{F_{*,p}\}_{p\in P}$ such that 

\begin{itemize}
\item every section in $F_{*,p}$ projects by $\pi\colon Z\to \wt Z$ to the section $f_p$
(such $F_{*,*}$ is called a {\em lifting} of the holomorphic 
$P$-section $f=\{f_p\}_{p\in P}$),
\item  $F_{(0),p}$ is the restriction to $A_0=K$ 
of the initial section $F^0_p$, and 
\item for $p\in P'_0$, every section in $F_{*,p}$ is 
the restriction of $F^0_p$ to the appropriate subdomain
(i.e., the deformation from $F^0$ to $F_{*,*}$ is fixed over $P'_0$).
\end{itemize}

A completely elementary construction of such  
{\em initial holomorphic complex} $F_{*,*}$ can be 
found in \cite[Proposition 4.7]{FPrezelj:OP2}.

\begin{remark}
We observe that, although the map $h=\wt h\circ \pi \colon Z\to X$ 
is not necessarily a submersion 
(since the projection $\wt h\colon \wt Z\to X$ may have singular
fibers), the construction in \cite{FPrezelj:OP2} still
applies since we only work with the fiber component 
of $F_p$ (over $f_p$) with respect to the submersion $\pi\colon Z \to \wt Z$.
All lifting problems locally reduce to working with functions. 
\qed \end{remark}

The rest of the construction amounts to finitely many 
homotopic modifications of the complex $F_{*,*}$.
At every step we collapse one of the cells in the complex
and obtain a family (parametrized by $P$) 
of holomorphic sections
over the union of the sets that determine the cell. 
In finitely many steps we obtain a family of 
{\em constant complexes} $F^1=\{F^1_p\}_{p\in P}$,
that is, $F^1_p$ is a holomorphic section of $Z\to X$ over $L$. 
This procedure is explained in \cite[Sect.\ 5]{FPrezelj:OP2}
(see in particular Proposition 5.1.). The additional 
lifting condition is easily satisfied at every step of the construction. 
In the end, the homotopy of complexes from $F^0$ 
to $F^1$ is replaced by a homotopy of constant complexes,
i.e., a homotopy of liftings $F^t$ of $f$ that consist 
of sections over $L$ (see the conclusion
of proof of Theorem 1.5 in \cite[p.\ 657]{FPrezelj:OP2}).

Let us describe more carefully the main step --- collapsing a 
segment in a holomorphic complex. (All substeps in collapsing 
a cell reduce to collapsing a segment, each time 
with an additional parameter set.)

We have a special pair $(A,B)$ of compact sets contained in $L\subset X$, 
called a {\em Cartan pair} \cite[Def.\ 4]{FPrezelj:OP3}, 
with $B$ contained in one of the  
sets $A_1,\ldots,A_n$ in our Cartan string $\cA$.
(Indeed, $B$ is the intersection of some of these sets.)
Further, we have an additional compact parameter set $\wt P$
(which  appears in the proof) and 
families of holomorphic sections of $h\colon Z\to X$, 
$a_{(p,\tilde p)}$ over $A$ and 
$b_{(p,\tilde p)}$ over $B$, depending continuously
on $(p,\tilde p)\in P\times \wt P$ and projecting by 
$\pi\colon Z\to \wt Z$ to the section $f_p$.
For $p\in P'_0$ we have $a_{(p,\tilde p)}=b_{(p,\tilde p)}$ over $A\cap B$.
These two families are connected  over $A\cap B$ 
by a homotopy of holomorphic sections 
$b^t_{(p,\tilde p)}$ $(t\in[0,1])$ such that
\[
	b^0_{(p,\tilde p)}=a_{(p,\tilde p)}, 
	\quad 
	b^1_{(p,\tilde p)}=b_{(p,\tilde p)}, 
	\quad \pi\circ b^t_{(p,\tilde p)} =f_p
\]
hold for each $p\in P$ and $t\in [0,1]$, and the homotopy is fixed
for $p\in P'_0$. These two families are joined into a family 
of holomorphic sections $\wt a_{(p,\tilde p)}$ over $A\cup B$,
projecting by $\pi$ to $f_p$. The deformation 
consists of two substeps:
\begin{enumerate}
\item  by applying the Oka-Weil theorem \cite[Theorem 4.2]{FPrezelj:OP1}
over the pair $A\cap B\subset B$ we approximate the family 
$a_{(p,\tilde p)}$ sufficiently closely, uniformly on a neighborhood of $A\cap B$, 
by a family $\wt b_{(p,\tilde p)}$ of holomorphic sections over $B$;  
\smallskip
\item  assuming that the approximation in (1) 
is sufficiently close, we glue the families 
$a_{(p,\tilde p)}$ and $\wt b_{(p,\tilde p)}$ 
into a family of holomorphic sections $\wt a_{(p,\tilde p)}$ over $A\cup B$
such that $\pi\circ \wt a_{(p,\tilde p)}=f_p$.
\end{enumerate}

For Substep (2) we can use local holomorphic sprays 
as in \cite[Proposition 3.1]{FF:CAP}, or we apply
\cite[Theorem 5.5]{FPrezelj:OP1}.
The projection condition $\pi\circ \wt a_{(p,\tilde p)}=f_p$ 
is a trivial addition.

Substep (1) is somewhat more problematic as 
it requires a dominating family of $\pi$-sprays 
on $Z|_U$ over an open set $U\subset \wt Z$ to which the sections 
$b^t_{(p,\tilde p)}$ project. (In the fiber bundle case we need triviality
of the restricted bundle $Z|_U \to U$ and POP of the fiber.)
Recall that $B$ is contained in one of the sets $A_k$,
and therefore
\[
		\bigcup_{p\in P'_j} f_p(B) \subset \bigcup_{p\in P'_j} f_p(A_k) \subset U_{l(j,k)}.
\]
Since $\pi\circ b^t_{(p,\tilde p)} = f_p$ and 
$Z$ admits a dominating family of $\pi$-sprays over each 
set $U_l$, Substep (1) applies separately to each of the $m$ families 
\[
	\{b^t_{(p,\tilde p)} \colon p\in P'_j,\ \tilde p\in \wt P,\ t\in[0,1]\},
	\quad j=1,\ldots,m. 
\]

To conclude the proof of the Main Step we 
use the {\em stepwise extension method},
similar to the one in \cite[pp.\ 138-139]{FPrezelj:OP2}.
In each step we make the lifting holomorphic for the parameter values
in one of the sets $P_j$, keeping the homotopy fixed over the union 
of the previous sets.

We begin with $P_1$.
The above shows that the Main Step can be accomplished in finitely many
applications of Substeps (1) and (2), using the pair of 
parameter spaces $P_0 \cap P'_1 \subset P'_1$
(instead of $P_0\subset P$).
We obtain a homotopy of liftings $\{F^t_p\colon p\in P'_1, t\in [0,1]\}$ 
of $f_p$ such that $F^{1}_p$ is holomorphic on $L$ for all 
$p$ in a neighborhood of $P_1$, and $F^t_p=F^0_p$ for all $t\in[0,1]$ 
and all $p$ in a relative neighborhood of $P_0\cap P'_1$ in $P'_1$.
We extend this homotopy to all values $p\in P$ by replacing $F^t_p$ by 
$F^{t\chi(p)}_p$, where $\chi\colon P\to[0,1]$ is a continuous 
function that equals one near $P_1$ and has support contained in $P'_1$. 
Thus $F^1_p$ is holomorphic on $L$ for all $p$ in a neighborhood 
$V_1$ of $P_0\cup P_1$, and $F^1_p=F^0_p$ for all $p$
in a neighborhood of $P_0$. 

We now repeat the same procedure with $F^1$ as the `initial' lifting of $f$, 
using the pair of parameter spaces $(P_0\cup P_1) \cap P'_2 \subset P'_2$.
We obtain a homotopy of liftings $\{F^t_p\}_{t\in[1,2]}$ of $f_p$
for $p\in P'_2$ such that the homotopy is fixed 
for all $p$ in a neighborhood of $(P_0\cup P_1) \cap P'_2$
in $P'_2$, and $F^2_p$ is holomorphic on $L$ for all $p$ 
in a neighborhood of $P_0\cup P_1\cup P_2$ in $P$.

In $m$ steps of this kind we get a homotopy 
$\{F^t\}_{t\in [0,m]}$ of liftings of $f$ such that $F^m_p$ 
is holomorphic on $L$ for all $p\in P$, and the 
homotopy is fixed in a neighborhood of $P_0$ in $P$. 
It remains to rescale the parameter interval $[0,m]$ back to $[0,1]$.

This concludes the proof in the special case when $X$ is a Stein manifold
and $X'=\emptyset$. In the general case we follow the induction scheme
in the proof of the parametric Oka principle for stratified
fiber bundles with POP fibers in \cite{FF:Kohn}; 
Cartan strings are now used inside the smooth strata.

When $\pi \colon Z\to \wt Z$ is a fiber bundle,
we apply the one-step approximation and gluing procedure as in \cite{FF:CAP},
without having to deal with holomorphic complexes. 
The Oka-Weil approximation theorem in Substep (1) is replaced by 
POP of the fiber. 
\end{proof}

\begin{proof}[Proof of Proposition \ref{local-ext}]
We begin by considering the special case
when $\pi\colon Z=\wt Z\times \C \to\wt Z$ is a trivial line bundle.
We have $F_p=(f_p,g_p)$ where $g_p$ is a holomorphic
function on $X$ for $p\in P_0$, and is holomorphic on
$K\cup X'$ for all $p\in P$.
We replace $X$ by a relatively compact subset containing $\bar D$
and consider it as a closed complex subvariety of a Euclidean space $\C^N$. 
Choose bounded pseudoconvex domains $\Omega\Subset \Omega'$ 
in $\C^N$ such that $\bar D \subset \Omega\cap X$.

By \cite[Lemma 3.1]{FPrezelj:OP3} there exist bounded
linear extension operators
\begin{eqnarray*}
		 S  \colon H^\infty(X\cap \,\Omega') 
		 &\longrightarrow& H^2(\Omega) = L^2(\Omega)\cap \cO(\Omega), \\
		 S' \colon H^\infty(X'\cap \,\Omega') &\longrightarrow&  H^2(\Omega),
\end{eqnarray*}
such that $S(g)|_{X\cap\, \Omega}=g|_{X\cap\, \Omega}$, and likewise for $S'$.
(In \cite{FPrezelj:OP3} we obtained an extension operator into 
$H^\infty(\Omega)$, but the Bergman space appeared as an intermediate
step. Unlike the Ohsawa-Takegoshi extension theorem
\cite{OhsawaT}, this is a soft result depending on the Cartan extension 
theorem and some functional analysis; the price is shrinking
of the domain.) Set
\[
		h_p = S(g_p|_{X\cap \,\Omega'}) - S'(g_p|_{X'\cap \,\Omega'}) \in H^2(\Omega), 
		\qquad p\in P_0.
\]
Then $h_p$ vanishes on $X'$, and hence it belongs to the closed
subspace $H^2_{X'}(\Omega)$   consisting of all
functions in $H^2(\Omega)$ that vanish on $X'\cap \Omega$.
Since these are Hilbert spaces, the generalized Tietze extension
theorem (a special case of Michael's convex selection theorem;
see \cite[Part C, Theorem 1.2, p.\ 232]{Repovs} or \cite{Dowker,Lee})
furnishes a continuous extension of the map $P_0 \to  H^2_{X'}(\Omega)$,
$p \to h_p$, to a map $P\ni p \to \wt  h_p\in H^2_{X'}(\Omega)$. Set 
\[
		G_p = \wt h_p + S'(g_p|_{X'\cap \,\Omega'}) \in H^2(\Omega),\qquad p\in P.
\]
Then 
\[ 
	G_p|_{X'\cap\, \Omega} = g_p|_{X'\cap\, \Omega} \ (\forall p\in P), \quad 
 G_p|_{X\cap\, \Omega} = g_p|_{X\cap\, \Omega}    \ (\forall p\in P_0).
\]
This solves the problem, except that $G_p$ should
approximate $g_p$ uniformly on $K$. Choose 
holomorphic functions $\phi_1,\ldots,\phi_m$ on $\C^N$ that generate the 
ideal sheaf of the subvariety $X'$ at every point in $\Omega'$. 
A standard application of Cartan's Theorem B shows that
in a neighborhood of $K$ we have
\[
		g_p = G_p + \sum_{j=1}^m \phi_j \, \xi_{j,p} 
\]
for some holomorphic functions $\xi_{j,p}$ in a neighborhood of $K$,
depending continuously on $p\in P$ and vanishing identically on $X$ 
for $p\in P_0$. (See e.g.\ \cite[Lemma 8.1]{FPrezelj:OP2}.)

Since the set $K$ is $\cO(X)$-convex, and hence polynomially convex in $\C^N$,
an extension of the Oka-Weil approximation theorem  (using a bounded 
linear solution operator for the $\dibar$-equation, given for instance  
by H\"ormander's $L^2$-methods \cite{Hor} or by integral kernels)	
furnishes functions $\wt \xi_{j,p} \in\cO(\Omega)$,
depending continuously on $p\in P$, such that $\wt \xi_{j,p}$ approximates $\xi_{j,p}$
as close as desired uniformly on $K$, and it vanishes 
on $X\cap \Omega$ when $p\in P_0$. Setting 
\[
			\wt  g_p = G_p + \sum_{j=1}^m  \phi_j \, \wt \xi_{j,p},\quad p\in P
\]
gives the solution. 
This proof also applies to vector-valued maps by applying it componentwise.

The general case reduces to the special case by using that 
for every $p_0 \in P_0$, the Stein subspace $F_{p_0}(X)$ 
(resp.\ $f_{p_0}(X)$) admits an open Stein neighborhood in $Z$
(resp.\ in $\wt Z$) according to a theorem of Siu \cite{Dem,Siu}.
Embedding these neighborhoods in Euclidean spaces and using holomorphic
retractions onto fibers of $\pi$ (see \cite[Proposition 3.2]{FF:Kohn}),
the special case furnishes neighborhoods $U_{p_0}\subset U'_{p_0}$
of $p_0$ in $P$ and a $P$-section $F'\colon \bar D\times P\to Z$,
homotopic to $F$ through liftings of $f$, such that
\begin{itemize}
\item[(i)]   $\pi\circ  F'_p=f_p$ for all $p\in P$,  
\item[(ii)]  $F'_p$ is holomorphic on $\bar D$ when $p\in U_{p_0}$,
\item[(iii)]  $F'_p=F_p$ for $p\in P_0 \cup (P\bs U'_{p_0})$,  
\item[(iv)] $F'_p|_{X'\cap D}=F_p|_{X'\cap D}$ for all $p\in P$, and 
\item[(v)]  $F'$ approximates $F$ on $K\times P$.
\end{itemize}
(The special case is first used for  parameter values 
$p$ in a neighborhood $U'_{p_0}$ of $p_0$;
the resulting family of holomorphic maps $\bar D\times U'_{p_0} \to \C^N$ 
is then patched with $F$ by using 
a cut-off function $\chi(p)$ with support in $U'_{p_0}$
that equals one on a neighborhood $U_{p_0}$ of $p_0$, 
and applying holomorphic retractions onto the fibers of $\pi$.) 
In finitely many steps of this kind we complete the proof.
\end{proof}

\begin{remark}
One might wish to extend Theorem \ref{SES:lifting-maps} 
to the case when $\pi\colon E\to B$ is a {\em stratified} 
subelliptic submersion, or a stratified fiber bundle with POP fibers.
The problem is that the induced stratifications 
on the pull-back submersions $f_p^* E\to X$ may change 
discontinuously with respect to the parameter $p$. 
Perhaps one could get a positive result by assuming
that the stratification of $E\to B$ is suitably compatible
with the variable map $f_p \colon X \to B$. 
\qed \end{remark}

Recall (Def.\ \ref{POPmaps}) that a holomorphic map 
$\pi\colon E\to B$ satisfies POP if the conclusion of 
Theorem \ref{SES:lifting-maps} holds. We show that 
this is a local property.

\begin{theorem}
\label{localization}
{\bf (Localization principle for POP)}
A holomorphic submersion $\pi\colon E\to B$ of a complex space 
$E$ onto a complex space $B$ satisfies \POP\ if and only if
every point $x\in B$ admits an open neighborhood $U_x\subset B$
such that the restricted submersion $\pi\colon E|_{U_x}\to U_x$
satisfies \POP.
\end{theorem}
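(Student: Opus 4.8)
The plan is to prove the nontrivial direction: if $\pi\colon E\to B$ satisfies POP locally, then it satisfies POP globally. The natural strategy is to reduce the global lifting problem over a Stein base $X$ to Theorem \ref{SES:OPS} (equivalently Theorem \ref{Lift}) by passing to the pull-back submersion, exactly as in the proof of Theorem \ref{SES:lifting-maps}. Given a holomorphic $P$-map $f\colon X\times P\to B$ and a $(P,P_0)$-lifting $F$ that is holomorphic near $K\cup X'$, I would first form $Z=X\times E$, $\wt Z=X\times B$, and the submersion $\wt\pi(x,e)=(x,\pi(e))\colon Z\to\wt Z$, so that liftings of $f$ become $P$-sections of $h=\wt h\circ\wt\pi\colon Z\to X$ with a prescribed projection to $\wt Z$; this is the same graph reduction used earlier. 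The point of localization is that although $\pi$ need not be subelliptic or a fiber bundle, it is \emph{locally} a map satisfying POP, and POP of the map is precisely what is needed to run Substep (1) (the approximation step) in the proof of the Main Step.

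First I would set up the covering data. For each $x\in B$ choose $U_x$ with $\pi\colon E|_{U_x}\to U_x$ satisfying POP. In the graph picture, $\wt\pi\colon Z|_{X\times U_x}\to X\times U_x$ then also satisfies POP (POP of $\pi\colon E|_U\to U$ immediately gives POP of $X\times E|_U\to X\times B|_U$ by the same graph-reduction argument, since a $P$-map into $X\times U$ is just a pair consisting of a holomorphic map to $X$ — automatically respected — and a holomorphic $P$-map to $U$). Next, working over a relatively compact $D\Subset X$ with $K\subset D$, I would cover the compact set $\bigcup_{p\in P}f_p(\overline{D})\subset B$ by finitely many such $U_1,\dots,U_N$, invoke Proposition \ref{local-ext} to make $F_p$ holomorphic on $D$ for $p$ in a neighborhood of $P_0$, then choose a Cartan string $\cA=(A_0,\dots,A_n)$ in $X$ and a compact cover $P_1,\dots,P_m$ of $P$ so that each $\bigcup_{p\in P'_j}f_p(A_k)$ lies in one of the $U_l$ — verbatim the combinatorial setup from the proof of Theorem \ref{SES:lifting-maps}. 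Then I would run the identical argument: build an initial holomorphic $\cK(\cA,Z;P,P'_0)$-complex lifting $f$, and collapse cells by repeated Substeps (1) and (2), where in Substep (1) the Oka-Weil approximation of the section family $b^t_{(p,\tilde p)}$ over $A\cap B\subset B_k\subset A_l$ is supplied not by a dominating spray family but directly by POP of the map $\wt\pi\colon Z|_{X\times U_l}\to X\times U_l$ (applied with the relevant auxiliary parameter space), and Substep (2), the gluing, is unchanged since it only uses local fiber sprays on the source and the projection condition is a trivial addition. Finitely many such steps over $P_1,\dots,P_m$, with the stepwise-extension method keeping the homotopy fixed over growing neighborhoods of $P_0$, yield a holomorphic lifting over $D$; induction over a Stein Runge exhaustion of $X$ finishes the general case, and the case of singular $X$ or $X'\ne\emptyset$ uses the stratified induction scheme of \cite{FF:Kohn} exactly as before.

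The main obstacle is verifying that POP of the \emph{map} $\pi\colon E|_{U}\to U$ (as opposed to subellipticity or a fiber-bundle structure) genuinely suffices to execute Substep (1) in the parametric, stratified setting — in particular that the approximation can be performed with the correct target, with the extra parameter $\tilde p$, with interpolation along $X'$, and with the projection $\pi\circ(\cdot)=f_p$ maintained. I expect this to go through because the conclusion of Theorem \ref{SES:lifting-maps}, which is exactly the definition of POP for a map, already packages approximation on $\cO(X)$-convex sets, interpolation on subvarieties, and the projection constraint; one simply applies it over each $U_l$ with source a small Stein piece of $X$ and parameter space an appropriate product $P'_j\times\wt P$. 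The only real subtlety is bookkeeping: ensuring that the neighborhoods $P'_j$, the Cartan string, and the sets $U_l$ can be chosen compatibly so that each local application is legitimate, which is the same bookkeeping already carried out in the proof of Theorem \ref{SES:lifting-maps} and requires no new idea.
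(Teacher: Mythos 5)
Your proposal is correct and follows essentially the same route as the paper: both reduce to re-running the proof of Theorem \ref{SES:lifting-maps}, observe that the only step sensitive to the global structure of $\pi$ is the Runge-type approximation in Substep (1), and supply that step from POP of the restricted submersions $E|_{U_\alpha}\to U_\alpha$ after refining the Cartan string so that each $f_p(A_j)$ lands in a single $U_\alpha$. The paper's write-up is just a terser version of the same argument (and also records the trivial converse direction explicitly).
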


\begin{proof}
If $\pi\colon E\to B$ satisfies POP then clearly so does 
its restriction to any open subset $U$ of $B$.

Conversely, assume that $B$ admits an open covering
$\cU=\{U_\alpha\}$ by open sets such that every 
restriction $E|_{U_\alpha}\to U_\alpha$ enjoys POP.
When proving POP for $\pi\colon E\to B$, a typical step
amounts to choosing small compact sets $A_1,\ldots,A_n$ in 
the source (Stein) space $X$ such that, for a given 
compact set $A_0\subset X$, $\cA=(A_0,A_1,\ldots,A_n)$
is a Cartan string. We can choose the sets $A_1,\ldots,A_n$
sufficiently small such that each map 
$f_p\colon X\to B$ in the given family sends each $A_j$
into one of the sets $U_\alpha\in\cU$.

To the string $\cA$ we associate a $\cK(\cA,Z;P,P_0)$-complex
$F_{*,*}$ which is then inductively deformed into a holomorphic
$P$-map $\wt F \colon \bigcup_{j=0}^n A_j \times P \to E$ 
such that $\pi\circ \wt F=f$. The main step in the inductive procedure 
amounts to patching a pair of liftings over a Cartan pair
$(A',B')$ in $X$, where the set $B'$ is contained in one of the sets 
$A_1,\ldots, A_n$ in the Cartan string $\cA$.
This is subdivided into substeps (1) and (2) 
(see the proof of Theorem \ref{SES:lifting-maps}). 
Only the first of these substeps, which requires a
Runge-type approximation property, is a nontrivial condition 
on the submersion $E\to B$. It is immediate from the definitions 
that this approximation property holds 
if there is an open set $U\subset B$ containing the image $f_p(B')$ 
(for a certain set of parameter values $p\in P$)
such that the restricted submersion $E|_U \to U$ satisfies 
POP.  In our case this is so since we have insured that 
$f_p(B')\subset f_p(A_j)\subset U_\alpha$ for some
$j\in\{1,\ldots,n\}$ and $U_\alpha\in\cU$.
\end{proof}

\section{Ascent and descent of the parametric Oka property}
\label{AD}
In this section we prove Theorem \ref{SES:ascend-descend} stated in Sec.\ \ref{Sec:Oka}.

\smallskip
\noindent {\em Proof of (i):} 
Assume that $B$ enjoys POP (which is equivalent to PCAP).
Let $(K,Q)$ be a special convex pair in $\C^n$ (Def.\ \ref{special}),
and let $F \colon Q \times P\to E$ be a $(P,P_0)$-map that is
holomorphic on $K$ (Def.\ \ref{P-section}). 

Then $f=\pi\circ F\colon Q\times P\to B$ is a $(P,P_0)$-map that is holomorphic on $K$. 
Since $B$ enjoys POP, there is a holomorphic $P$-map $g\colon Q\times P\to B$
that agrees with $f$ on $Q\times P_0$ and is uniformly close to 
$f$ on a neighborhood of $K\times P$ in $\C^n\times P$. 

If the latter approximation is close enough, there exists a 
holomorphic $P$-map $G\colon K\times P\to E$ such that 
$\pi\circ G=g$, $G$ approximates $F$ on $K\times P$, 
and $G=F$ on $K\times P_0$. To find such lifting of $g$,
we consider graphs of these maps (as in the proof of 
Theorem \ref{SES:lifting-maps}) and apply a holomorphic retraction 
onto the fibers of $\pi$ \cite[Proposition 3.2]{FF:Kohn}.

Since $G=F$ on $K\times P_0$, we can extend $G$ to 
$(K\times P) \cup (Q\times P_0)$ by setting $G=F$ 
on $Q\times P_0$. 

Since $\pi\colon E\to B$ is a Serre fibration and $K$ is a strong deformation
retract of $Q$ (these sets are convex),  $G$ extends to a continuous 
$(P,P_0)$-map $G\colon Q\times P\to E$ such that $\pi\circ G=g$.
The extended map remains holomorphic on $K$.

By Theorem \ref{SES:lifting-maps} there is a homotopy of liftings
$G^t\colon Q\times P\to E$ of $g$ $(t\in[0,1])$ which is fixed on
$Q \times P_0$ and is holomorphic and uniformly close to 
$G^0=G$ on $K\times P$. The holomorphic $P$-map $G^1\colon Q\times P\to E$
then satisfies the condition in Def.\ \ref{def:CAP}
relative to $F$. This proves that $E$ enjoys PCAP and hence POP.

\smallskip
\noindent {\em Proof of (ii):} 
Assume that $E$ enjoys POP. Let $(K,Q)$ be a special convex pair, and let 
$f \colon Q \times P\to B$ be a $(P,P_0)$-map that is holomorphic on $K$. 
Assuming that $P$ is contractible, the Serre fibration property 
of $\pi\colon E\to B$ insures the existence of a continuous $P$-map 
$F\colon Q\times P\to E$ such that $\pi\circ F=f$.
(The subset $P_0$ of $P$ does not play any role here.)
Theorem \ref{SES:lifting-maps} furnishes a homotopy 
$F^t\colon Q\times P\to E$ $(t\in [0,1])$ such that 
\begin{itemize}
\item[(a)]  $F^0=F$,
\item[(b)]  $\pi\circ F^t=f$ for each $t\in[0,1]$, and 
\item[(c)]  $F^1$ is a $(P,P_0)$-map that is holomorphic on $K$. 
\end{itemize}
This is accomplished in two steps: 
We initially apply Theorem \ref{SES:lifting-maps}  
with $Q\times P_0$ to obtain a homotopy $F^t\colon Q\times P_0\to E$ 
$(t\in [0,\frac{1}{2}])$, satisfying properties (a) and (b) above, 
such that $F^{1/2}_p$ is holomorphic on $Q$ for all $p\in P_0$. 
For trivial reasons this homotopy extends continuously to all values $p\in P$.
In the second step we apply Theorem \ref{SES:lifting-maps} over $K\times P$,
with $F^{1/2}$ as the initial lifting of $f$ and keeping the homotopy fixed 
for $p\in P_0$ (where it is already holomorphic), 
to get a homotopy $F^t$ $(t\in[\frac{1}{2},1])$ such that $\pi\circ F^t=f$ 
and $F^1_p$ is holomorphic on $K$ for all $p\in P$. 

Since $E$ enjoys POP, $F^1$ can be approximated uniformly
on $K\times P$ by holomorphic $P$-maps $\wt F \colon Q\times P\to E$
such that $\wt F=F^1$ on $Q\times P_0$. 
Then 
\[
		\wt  f=\pi\circ \wt F\colon Q\times P\to B
\]
is a holomorphic $P$-map that agrees with $f$ on $Q\times P_0$
and is close to $f$ on $K\times P$. 

This shows that $B$ enjoys PCAP for any contractible 
(compact, Hausdorff) parameter space $P$ and for any closed 
subspace $P_0$ of $P$. Since the implication PCAP$\Longrightarrow$POP
in Theorem \ref{CAP} holds for each specific pair $(P_0,P)$
of parameter spaces, we infer that $B$ also enjoys POP for such
parameter pairs. This completes the proof of (ii).

\smallskip
\noindent {\em Proof of (iii):} 
Contractibility of $P$ was used in the proof of (ii) to lift 
the map $f \colon Q \times P\to B$ to a map $F\colon Q \times P\to E$.
Such a lift exists for every topological space if $\pi\colon E\to B$ is 
a weak homotopy equivalence. This is because a Serre fibration 
between smooth manifolds is also a Hurewicz fibration 
(by Cauty \cite{Cauty}), and a weak homotopy equivalence 
between them is a homotopy  equivalence by the Whitehead Lemma.
\qed

\medskip
{\em Acknowledgement.}
I express my sincere thanks to Finnur L\'arusson for his
questions which led to this paper, and for very helpful 
discussions and remarks. 
I also thank Petar Pave\v si\'c and Du\v san Repov\v s
for advice on Tietze extension theorem used in the 
proof of Proposition \ref{local-ext}.

\bibliographystyle{amsplain}

\end{document}